\documentclass[11pt,a4paper]{article}
\usepackage{epsf,epsfig,amsfonts,amsgen,amsmath,amstext,amsbsy,amsopn,amsthm,lineno, caption}
\usepackage{color}
\usepackage{graphicx}
\usepackage{subfigure}
\usepackage{epstopdf}
\usepackage{authblk}

\setlength{\textwidth}{167mm}
\setlength{\oddsidemargin}{4mm} \setlength{\evensidemargin}{4mm}
\setlength{\topmargin}{-5mm} \setlength{\textheight}{245mm}
\topmargin -18mm

\newtheorem{theorem}{Theorem}

\newtheorem{lemma}{Lemma}
\newtheorem{cor}{Corollary}

\theoremstyle{definition}
\newtheorem{exam}{Example}
\newtheorem{definition}{Definition}
\newtheorem{claim}{Claim}

\newtheorem{conjecture}{Conjecture}

\baselineskip 20pt

\begin{document}
	\title
	{\Large Properly colored cycles in edge-colored complete graphs containing no monochromatic triangles: a vertex-pancyclic analogous result 
	}
	
	\date{}
	\author[1,2]{\small Ruonan Li \thanks{E-mail: rnli@nwpu.edu.cn}}
	\affil[1]{School of Mathematics and Statistics, Northwestern Polytechnical University\\
		
		Xi'an, 710129, P.R.~China}
	\affil[2]{Xi'an-Budapest Joint Research Center for Combinatorics,
		Northwestern Polytechnical University\\
		
		 Xi'an, 710129, P.R.~China}
	\maketitle
	\abstract
	
	A properly colored cycle (path) in an edge-colored graph is a cycle (path) with consecutive edges assigned distinct colors. A monochromatic triangle is a cycle of length $3$ with the edges assigned a same color. It is known that every edge-colored complete graph without containing monochromatic triangles always contains a properly colored Hamilton path. In this paper, we investigate the existence of properly colored cycles in edge-colored complete graphs when monochromatic triangles are forbidden. We obtain a vertex-pancyclic analogous result combined with a characterization of all the exceptions.
	
	%
	
	\noindent
	\section{Introduction} 
	Let $G$ be an undirected graph. An {\it edge-coloring} of $G$ is a mapping $col: E(G)\rightarrow \mathbb{N}$, where $\mathbb{N}$ is the natural number set. A graph $G$ is called an {\it edge-colored graph} if $G$ is assigned an edge-coloring. Denote by $col(e)$ and $col(G)$, respectively, the color of an edge $e$ and the set of colors assigned to $E(G)$. An edge-colored graph is called {\it properly colored} (or {\it PC}  for short) if adjacent edges are assigned distinct colors, is called {\it rainbow} if $|col(G)|=|E(G)|$, and is called {\it monochromatic} if $|col(G)|=1$.
	For a vertex $v \in V(G)$, the {\it color degree} of $v$ in $G$, denoted by $d^c_G(v)$ is the number of distinct colors assigned to the edges incident to $v$. We use $\delta^c(G)=\min\{d^c_G(v):v\in V(G)\}$ to denote the {\it minimum color degree} of $G$. For a color $i\in col(G)$, let $G^i$ be the subgraph of $G$ induced by all the edges of color $i$. We use $\Delta^{mon}(G)=\max\{\Delta(G^i):i\in col(G)\}$ to denote the {\it maximum monochromatic degree} of $G$.  For two disjoint subsets $V_1$ and $V_2$ of $V(G)$, denote by $col(V_1,V_2)$ the set of colors appearing on the edges between $V_1$ and $V_2$ in $G$. When $V_1=\{v\}$, use $col(v,V_2)$ to denote $col(\{v\},V_2)$. Represent by {\it triangle}, {\it quadrangle} and {\it pentagon} the cycles of lengths $3,4$ and $5$, respectively. For other notation and terminology not defined here, we refer the reader to \cite{Bondy:2008}.
	
	This paper mainly investigates the existence of PC cycles in edge-colored complete graphs when monochromatic triangles are forbidden. Barr \cite{Barr:1998} proved that an edge-colored $K_n$ without containing monochromatic triangles must contain a PC Hamilton path. This can be easily verified by assuming a longest PC path, and immediately extending it when it is not a Hamilton path. Therefore we wonder whether the condition ``no monochromatic triangles'' can also in a large probability make an edge-colored $K_n$ to contain a PC Hamilton cycle. 
	
	To grasp some intuition, we first establish two examples which show that an edge-colored $K_n$ may neither contain a monochromatic triangle nor a PC Hamilton cycle.
	
	\begin{exam}\label{exam:K5}
		Let $G$ be an edge-colored $K_5$ with $V(G)=\{v_1,v_2,v_3,v_4,v_5\}$. The cycle $v_1v_2v_3v_4v_5v_1$ is of color $1$ and the cycle $v_1v_3v_5v_2v_4v_1$ is of color $2$.
	\end{exam}
	\begin{exam}\label{exam:direct}
		Let $G$ be an edge-colored $K_n$ containing no monochromatic triangles and $v_1,v_2,v_3$ are three distinct vertices in $G$ with $col(v_1v_2)=1$, $col(v_2v_3)=2$, $col(v_3v_1)=3$ and $col(v_i, V(G)\setminus\{v_1,v_2,v_3\})=\{i\}$ for all $i=1,2,3$. 
	\end{exam}
	
	In Example \ref{exam:K5}, since $|col(G)|=2$, no PC pentagon exists.
	In Example \ref{exam:direct}, let $V=\{v_1,v_2,v_3\}$. 
	For each edge $e=uv_i$ with $u$ in $G-V$, by the coloring, from the vertex $v_i$, we can only extend $uv_i$ within $V$ (to obtain a longer PC path). Thus there is no PC cycle in $G$ containing edges between $V$ and $G-V$. Hence no PC Hamilton cycle exists. Generally, we give the following definition, which was firstly proposed in  \cite{RN-BL-SG:2020}. 
	\begin{definition}\label{def:non-deg}
		Let $G$ be an edge-colored graph. If there exists a nonempty set $S\subseteq V(G)$ and a function $f: S\rightarrow \mathbb{N}$ such that for each edge $e$ joining $u$ and $v$, the following holds: \\
		(i)  if $u,v\in S$, then $col(e)=f(u)$ or $col(e)=f(v)$;\\
		(ii) if $u\in S$ and $v\in V(G)\setminus S$, then $col(e)=f(u)$,\\
		then we say $G$ is {\it semi-degenerate}, $S$ is a {\it degenerate set} of $G$, and say $f$ is a compatible to $S$ in $G$ . In particular, if $S=V(G)$, then we say $G$ is {\it degenerate} and $f$ is compatible to $G$ ; 
		if $S$ does not exist, then we say $G$ is {\it non-degenerate}.
	\end{definition}
	Obviously, in Example \ref{exam:direct}, $\{v_1,v_2,v_3\}$ is a degenerate set of $G$. We can further conclude that if an edge-colored graph $G$ contains a degenerate set $S\neq V(G)$, then each edge between $S$ and $G-S$ is not contained in any PC cycles. Hence no PC Hamilton cycle exists.
	
	Interestingly, we find that these two kinds of examples above are the only counterexamples for the existence of PC Hamilton cycles in edge-colored $K_n$ containing no monochromatic triangles. In fact, a stronger theorem is obtained as following, which can be regarded as a vertex-pancyclic analogous result.
	
	\begin{theorem}\label{thm:main}
		Let $G$ be an edge-colored $K_n$ ($n\geq 4$) containing no monochromatic triangles. Then one of the following holds:\\
		$(a)$ each vertex of $G$ is contained in PC cycles of all lengths from $4$ to $n$;\\
		$(b)$ $G$ has a degenerate set $S\neq V(G)$.\\
		$(c)$ $G$ is an edge-colored $K_5$ composed of two edge-disjoint monochromatic pentagons.
	\end{theorem}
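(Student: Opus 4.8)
The plan is to prove the contrapositive: assume $G$ is a non-degenerate edge-colored $K_n$ with no monochromatic triangle, and show every vertex lies on PC cycles of each length $4,\dots,n$, unless $n=5$ and $G$ is the double pentagon. I would proceed by induction on $n$, but the real engine will be a local \emph{extension/rotation} argument analogous to the one Barr uses for PC Hamilton paths. Fix a vertex $v_0$ and suppose $v_0$ lies on a PC cycle $C$ of length $k$ with $4\le k<n$; the goal is to produce a PC cycle of length $k+1$ through $v_0$. Let $w$ be any vertex off $C$. Writing $C=x_1x_2\cdots x_k x_1$, I want to insert $w$ between some consecutive pair $x_ix_{i+1}$, which succeeds provided $col(x_iw)\ne col(x_{i-1}x_i)$ and $col(wx_{i+1})\ne col(x_{i+1}x_{i+2})$ and $col(x_iw)\ne col(wx_{i+1})$; the last inequality is where the \emph{no monochromatic triangle} hypothesis does its work, since it controls when $x_iw$ and $wx_{i+1}$ can share a color relative to $x_ix_{i+1}$.

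The key steps, in order, are as follows. First I would establish a base case, either $n=4$ or the smallest nontrivial length $k=4$, by direct case analysis, and here I would isolate the double-pentagon $K_5$ as the unique obstruction: when $|col(G)|=2$ on $K_5$ no PC cycle of length $5$ can exist, matching Example~\ref{exam:K5}. Second, I would set up the insertion lemma: for a fixed external vertex $w$ and the $k$ candidate slots, I count the slots that \emph{fail}. A slot $x_ix_{i+1}$ fails only for a bounded number of ``color reasons,'' and I would argue that if $w$ cannot be inserted into \emph{any} slot, then the colors on the edges from $w$ to $C$, together with the colors along $C$, are forced into a rigid pattern. Third, I would show that this rigid pattern, propagated over all choices of $w$ and all rotations of $C$, forces a degenerate set, contradicting non-degeneracy. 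This propagation is essentially an amortized/global counting argument: local insertion failure at every slot and every external vertex cannot coexist with $G$ being non-degenerate.

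The main obstacle will be the third step — converting \emph{local} insertion obstructions into the \emph{global} structural conclusion that a degenerate set $S\ne V(G)$ exists (or that $G$ is the double pentagon). Showing a single $w$ cannot be inserted is easy; the difficulty is that the reasons for failure can differ from slot to slot and from vertex to vertex, so I cannot directly read off a function $f$ as in Definition~\ref{def:non-deg}. I expect to handle this by first proving that if insertion fails everywhere then each external vertex $w$ sees at most one or two colors toward $C$ in a controlled way, then bootstrapping: the no-monochromatic-triangle condition forbids the ``everything one color'' degeneracy except precisely in the $K_5$ double-pentagon configuration, so either I win by inserting, or the forced coloring \emph{is} a compatible function $f$ on an appropriate set $S$. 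A secondary technical point is handling the shortest cycles (length $4$), where there is the least room to maneuver and the rotation argument has few slots; I would treat length $4$ separately and bootstrap upward, using the Hamilton-path result of Barr~\cite{Barr:1998} as a starting structure to seed the first PC cycle through $v_0$.
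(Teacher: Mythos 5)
There is a genuine gap in your setup: the negation of conclusion $(b)$ is \emph{not} ``$G$ is non-degenerate.'' Condition $(b)$ asks for a degenerate set $S\neq V(G)$, so when $(b)$ fails there are two cases: $G$ has no degenerate set at all (non-degenerate), or $V(G)$ itself is a degenerate set while no proper subset is. Your contrapositive only addresses the first case. The second case is non-vacuous (e.g.\ color $K_n$ by a compatible function $f$ whose associated orientation --- arc $u\rightarrow v$ when $col(uv)=f(u)\neq f(v)$ --- is a strongly connected tournament), and in that case your core mechanism breaks down: your plan is ``insertion fails everywhere $\Rightarrow$ the forced coloring is a compatible function on some set $S$ $\Rightarrow$ contradiction with non-degeneracy,'' but here a degenerate set genuinely exists (namely $V(G)$), so the rigid pattern you extract yields no contradiction and no extension. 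The paper handles this case by an entirely separate argument (its Theorem~\ref{thm:deg-vertex-pan}): the compatible function turns $G$ into a strongly connected multipartite tournament with partite sets of size at most $2$ (strong connectivity is exactly what ``no proper degenerate set'' buys), and vertex-pancyclicity from length $4$ upward is then proved by a digraph argument (Lemma~\ref{lem:directed}, resting on Moon's theorem), not by color-insertion. Without this branch your proof cannot establish conclusion $(a)$ for degenerate $G$, which is required by the theorem.

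For the genuinely non-degenerate case, your outline does match the paper's Theorem~\ref{thm:non-deg} in spirit (failed insertion $\Rightarrow$ rigid color pattern $\Rightarrow$ degenerate set or the double-pentagon $K_5$), but two of your steps are thinner than what is actually needed. First, seeding the induction: Barr's Hamilton \emph{path} does not hand you a PC cycle through $v_0$; the paper instead proves a dedicated lemma (Lemma~\ref{lem:C4}) that every vertex of a non-degenerate $G$ lies on a PC quadrangle, and its proof is a substantial case analysis on the color classes at $v_0$, each branch ending in a contradiction with non-degeneracy. Second, the ``rigid pattern'' step is not a counting argument over slots: the paper first shows all external vertices fail in the \emph{same} direction (its Claim~\ref{cl:direction}), then propagates chord colors around the cycle (Claims~\ref{cl:repeat}--\ref{cl:minus2}) to force $\ell$ even, every color appearing exactly twice, and antipodal structure --- and it is this structure, not an amortized count, that either yields a degenerate set or pins down $\ell=4$, $n=5$, the double pentagon.
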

    We postpone the proof of Theorem \ref{thm:main} to Section \ref{sec:3}. In the next section, we give several additional results and analyses related to Theorem \ref{thm:main}.
 \section{Additional results and analyses}  
	The absence of PC triangles in the statement of Theorem \ref{thm:main}(a) can be explained by the following well-known structural theorem.
\begin{theorem}[Gallai \cite{Gallai:1967}]\label{thm:Gallai}
	Let $G$ be an edge-colored complete graph containing no PC triangles. Then $V(G)$ can be partitioned into $V_1,V_2,\ldots,V_k$ such that $|\cup_{1\leq i<j\leq p}col(V_i,V_j)|\leq 2$ and  $|col(V_i,V_j)|=1$ for all $i,j$ with  $1\leq i<j\leq p$.
\end{theorem}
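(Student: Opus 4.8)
The plan is to prove the statement by induction on $n=|V(G)|$, producing a partition with at least two parts in which every pair of parts is joined by a single color and at most two colors occur between parts altogether; throughout, the hypothesis is that $G$ contains no PC (equivalently, rainbow) triangle. The base cases $n\le 2$ are immediate. For the inductive step I would dispose of several easy configurations first and isolate the genuinely hard one. If at most two colors occur on $E(G)$, the partition into singletons already works, since each pair of singletons spans a single edge. Call a set $M\subseteq V(G)$ a \emph{module} if every vertex $w\notin M$ satisfies $|col(w,M)|=1$. If $G$ has a module $M$ with $1<|M|<n$, I would contract $M$ to a single vertex to obtain a PC-triangle-free $K_{n-|M|+1}$, apply the induction hypothesis there, and then re-expand $M$ inside the part that received it; the module property guarantees that every external vertex sees all of $M$ in the color it assigned to the contracted vertex, so the pairwise-monochromatic and two-color conditions are preserved.

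The main tool for the remaining configurations is a short propagation argument. Fix a color $c$ present on $E(G)$ and consider the color class $G^c$. The first claim is: \emph{if $G^c$ is disconnected with components $A_1,\dots,A_r$ $(r\ge 2)$, then $|col(A_i,A_j)|=1$ for all $i\ne j$}. Indeed, for adjacent $u,u'$ in $A_i$ (so $col(uu')=c$) and any $w\in A_j$, the triangle $uu'w$ must not be rainbow, and since neither $uw$ nor $u'w$ can have color $c$ (they cross components of $G^c$), we are forced to have $col(uw)=col(u'w)$; connectivity of $A_i$ in color $c$ then forces a single color on all edges from $A_i$ to a fixed $w$, and running the same argument inside $A_j$ forces a single color on all of $col(A_i,A_j)$. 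In this case the partition $\{A_1,\dots,A_r\}$ is pairwise monochromatic, its reduced graph (on the parts, each pair colored by the unique color between them) is again PC-triangle-free, lives on fewer than $n$ vertices, and avoids color $c$; so the induction hypothesis applied to the reduced graph, pulled back to $G$, yields the desired partition. The dual situation is even easier: if the graph of all edges of color $\neq c$ is disconnected, then its components are pairwise joined entirely by color $c$, and these components form a partition whose reduced graph is monochromatic, which is done directly.

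After these reductions, the only surviving case is that $G$ is \emph{prime} (it has no module with $1<|M|<n$) and uses at least three colors. The propagation claim, together with primeness, already constrains this case tightly: if $G^c$ were disconnected, its components would be modules (each is seen monochromatically from outside, by the claim), contradicting primeness unless $G^c$ is edgeless; hence every present color class $G^c$ is spanning connected, and dually the graph of all edges of color $\neq c$ is spanning connected as well. Moreover, no vertex $v$ can see $V(G)\setminus\{v\}$ in a single color, for then $V(G)\setminus\{v\}$ would be a nontrivial module. The goal in this case is to reach a contradiction, equivalently to exhibit a nontrivial module after all.

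The hard part will be exactly this last step: showing that a prime, PC-triangle-free coloring of $K_n$ $(n\ge 3)$ cannot use three or more colors. Edge counting shows such a configuration would need $n\ge 6$ (three spanning-connected color classes require at least $3(n-1)$ edges), so no cheap contradiction is available, and the obstruction is precisely that a module must be manufactured from the local color structure. My intended route is to analyze, for a fixed vertex $v$, the partition of $V(G)\setminus\{v\}$ into its color-neighborhoods $N_1,\dots,N_t$, where $N_i=\{u:col(vu)=i\}$: the no-rainbow condition forces every edge between $N_a$ and $N_b$ to use one of the two colors $a,b$, a strong regularity that I would iterate to build a homogeneous set and so contradict primeness. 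This is the delicate combinatorial core; an alternative, and historically the original, way to close it is through Gallai's correspondence between such colorings and transitive orientations of comparability graphs, where primeness of the coloring matches indecomposability of the associated partial order and the two-color conclusion becomes the statement that an indecomposable comparability structure is a linear order. Either way, once this prime case is settled the induction closes and the stated partition is obtained.
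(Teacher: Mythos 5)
First, note that the paper offers no proof of this statement: it is quoted as a classical result with a citation to Gallai's 1967 paper, so your attempt has to stand on its own, and it does not. Your reductions are correct and standard: the module-contraction step, the propagation claim that a disconnected color class $G^c$ forces pairwise-monochromatic edges between its components (so each nontrivial component is a module), and the dual observation for the graph of edges of color $\neq c$ are all sound, and together they correctly reduce the theorem to the case of a \emph{prime} coloring (no module $M$ with $1<|M|<n$) using at least three colors. But that remaining case is not a residual technicality --- it \emph{is} Gallai's theorem. Since every part of a Gallai partition is automatically a module (each outside vertex sees it in one color), primeness forces all parts to be singletons, and the conclusion ``at most two colors between parts'' becomes exactly the assertion you must prove: a prime rainbow-triangle-free coloring of $K_n$, $n\geq 3$, uses at most two colors. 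Your easy steps are the routine wrapper around this core, and the core is left as an intention, not an argument.

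Concretely, the gap is the sentence ``a strong regularity that I would iterate to build a homogeneous set and so contradict primeness.'' The observation it rests on --- that for a fixed vertex $v$ with color-neighborhoods $N_a$, $N_b$, every edge between $N_a$ and $N_b$ is colored $a$ or $b$ --- is correct but far too weak by itself: it is satisfied by highly nontrivial configurations, and no straightforward iteration of it manufactures a module. Closing this case is precisely where Gallai's original proof (via transitive orientations of comparability graphs) and the later perfect-graph-based treatments (Cameron--Edmonds--L\'{o}vasz style) expend substantial effort; your alternative suggestion to route through ``Gallai's correspondence'' amounts to citing the theorem's own original proof rather than supplying one. Your edge-counting remark that three spanning connected color classes force $n\geq 6$ correctly shows no cheap contradiction exists, which only underlines that the decisive step is missing. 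As it stands, the proposal is a correct reduction plus an unproven kernel, so it does not constitute a proof of the statement.
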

From Theorem \ref{thm:Gallai}, we can easily construct infinite number of  non-degenerate edge-colored complete graphs with each triangle assigned exactly two colors, namely no monochromatic nor PC triangles exist. 

	The pancyclic  properties in graphs and digraphs have been well studied many decades before. See \cite{Bondy: 1971}\cite{Moon:1966} and \cite{Schmeichel-Hakimi:1974}. 
	In edge-colored complete graphs, Fujita and Magnant \cite{Fujita:2011} conjectured the following.
	\begin{conjecture}[Fujita and Magnant \cite{Fujita:2011}]\label{con:Fujita}
		Let $G$ be an edge-colored $K_n$ with $\delta^c(G)\geq \frac{n+1}{2}$. Then each vertex of $G$ is contained in PC cycles of all lengths from $3$ to $n$.
	\end{conjecture}
	Under the condition of Conjecture \ref{con:Fujita}, Fujita and Magnant \cite{Fujita:2011} proved  that each vertex is contained in a PC triangle and a PC quadrangle; Li et al.\cite{R.Li:2017} showed that each vertex of $G$ is contained in PC cycles of length at least $\delta^c(G)$; Chen et al. \cite{Chen:2019} confirmed the conjecture when no monochromatic triangles exist.
	
	\begin{theorem}[Chen et al. \cite{Chen:2019}]\label{thm:Chen}
		Let $G$ be an edge-colored $K_n$ with $\delta^c(G)\geq \frac{n+1}{2}$. If $G$ contains no monochromatic triangles, then each vertex of $G$ is contained in PC cycles of all lengths from $3$ to $n$.
	\end{theorem}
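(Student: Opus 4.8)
The plan is to fix an arbitrary vertex $v$ and prove, by induction on the cycle length $\ell$ ranging from $3$ up to $n$, that $v$ lies on a properly colored (PC) cycle of length $\ell$. The two base cases $\ell=3$ and $\ell=4$ are exactly the statement that every vertex lies on a PC triangle and a PC quadrangle, which Fujita and Magnant \cite{Fujita:2011} already proved under $\delta^c(G)\ge\frac{n+1}{2}$, and for which the additional no-monochromatic-triangle hypothesis only provides more room: it rules out the obstruction to converting a pair of distinctly colored edges $vx,vy$ at $v$ into a PC triangle. The whole difficulty is therefore concentrated in the \emph{extension step}: given a PC cycle $C=v_1v_2\cdots v_\ell v_1$ through $v=v_1$ with $3\le \ell<n$, produce a PC cycle of length $\ell+1$ that still contains $v$. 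Note that R.~Li et al.\ \cite{R.Li:2017} already supply long cycles, so the genuinely new content is the \emph{short} lengths, where the no-monochromatic-triangle hypothesis must do real work.

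For the extension step I would first try the direct \emph{insertion} of an outside vertex. Writing $R=V(G)\setminus V(C)$ and $e_i=col(v_iv_{i+1})$, inserting $w\in R$ between $v_i$ and $v_{i+1}$ (replacing the edge $v_iv_{i+1}$ by the path $v_iwv_{i+1}$) yields a PC cycle of length $\ell+1$ precisely when $col(v_iw)\neq e_{i-1}$, $col(wv_{i+1})\neq e_{i+1}$, and $col(v_iw)\neq col(wv_{i+1})$; crucially, since insertion deletes no vertex, $v=v_1$ is automatically preserved. The goal is to show that at least one pair $(w,i)$ satisfies all three conditions. I would bound the number of \emph{blocked} positions by summing, over the $\ell$ edges of $C$ and the vertices of $R$, the contributions of the three failure types, and compare this with the total $\ell\,|R|$, using $\delta^c(G)\ge\frac{n+1}{2}$ to force each $w$ to repeat few colors on $C$ and thereby keep the failure count strictly below the total. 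The absence of monochromatic triangles is what eliminates the most dangerous obstruction, namely a vertex $w$ that sees a long stretch of $C$ in a single color: where such a stretch meets a cycle edge of the same color it would produce a monochromatic triangle $wv_iv_{i+1}$, and otherwise the third condition remains satisfiable.

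When a pure insertion is blocked everywhere, I expect to need a \emph{local restructuring} fallback: a rotation that reroutes a cycle edge incident with some $v_j\neq v$ through an outside vertex, or a swap exchanging a cycle vertex $v_j$ with a vertex of $R$, always chosen so that $v=v_1$ is never disturbed and so that the modified cycle admits an insertion. The main obstacle is exactly this blocked regime, and I anticipate two delicate points. First, the threshold $\frac{n+1}{2}$ is tight, so the counting must be pushed to its limit and the extremal color patterns — where almost every position is blocked — must be shown to collide with either the color-degree bound or the no-monochromatic-triangle hypothesis; these extremal patterns are reminiscent of Examples~\ref{exam:K5} and~\ref{exam:direct} and of the Gallai structure of Theorem~\ref{thm:Gallai}. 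Second, the case $|R|=1$, extending a PC cycle of length $n-1$ to a PC Hamilton cycle, is the most constrained, since only one outside vertex is available and insertion must succeed at a single suitable edge; here it is the restructuring argument, rather than the counting, that I expect to carry the proof.
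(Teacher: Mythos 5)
Your plan fails at its central step, and the failure is conceptual rather than technical. You propose to make the insertion step work by counting: $\delta^c(G)\geq\frac{n+1}{2}$ is supposed to force each outside vertex $w$ to ``repeat few colors on $C$'' and hence block few of the $\ell$ insertion positions, so that some pair $(w,i)$ survives. But blocking is not caused by repetition of colors; it is caused by \emph{alignment} of $w$'s colors with the cycle edges. If $col(wu)=col(uu^{+})$ for every $u\in V(C)$ (or, symmetrically, $col(wu)=col(uu^{-})$ for every $u$), then every single insertion of $w$ is blocked: inserting $w$ between $v_i$ and $v_{i+1}$ produces $col(wv_{i+1})=col(v_{i+1}v_{i+2})$ (respectively $col(v_iw)=col(v_{i-1}v_i)$). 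Such a vertex $w$ can see as many as $\ell$ \emph{distinct} colors on $C$, so this configuration is perfectly compatible with $\delta^c(G)\geq\frac{n+1}{2}$ and with the absence of monochromatic triangles; it is not an extremal pattern that a counting argument can exclude. Indeed, Lemma~\ref{lem:not ext} of the paper states that these alignments (together with $|col(w,C)|=1$) are exactly what happens whenever $w$ cannot be inserted, and the sets $S_2,S_3$ of such vertices are the \emph{starting point} of the real argument, not a contradiction. So the number of blocked positions can equal $\ell\,|R|$, your inequality cannot be ``pushed to its limit,'' and the entire burden falls on the ``local restructuring'' fallback, for which you give no argument at all. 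That fallback is the whole theorem: in the paper it corresponds to the degenerate/non-degenerate dichotomy, Lemma~\ref{lem:directed} on multipartite tournaments, Lemma~\ref{lem:C4}, and Claims~\ref{cl:direction}--\ref{cl:minus2} inside the proof of Theorem~\ref{thm:non-deg}.

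For contrast, the paper does not reprove Chen et al.'s theorem by induction at all; it derives it from Theorem~\ref{thm:main} in two lines. If alternative $(b)$ held, i.e.\ $G$ had a degenerate set $S\neq V(G)$ with compatible function $f$, then every edge from $u\in S$ to $V(G)\setminus S$ has the single color $f(u)$, and each edge inside $S$ contributes a color other than $f(u)$ at most at one of its endpoints; summing over $S$ gives some $u\in S$ with $d^c_G(u)\leq 1+\frac{|S|-1}{2}\leq\frac{n}{2}<\frac{n+1}{2}$. Alternative $(c)$ has $\delta^c(G)=2<3$. Hence under the color degree hypothesis alternative $(a)$ holds, yielding PC cycles of all lengths from $4$ to $n$ through every vertex, and the missing length $3$ is precisely the Fujita--Magnant triangle result \cite{Fujita:2011} that you invoke for your base case. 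So the one ingredient of your proposal that is sound (citing \cite{Fujita:2011} for the PC triangle) is also the only ingredient the paper needs beyond Theorem~\ref{thm:main}; everything else in your outline would have to be rebuilt from scratch along structural, not counting, lines.
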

	
	Theorem \ref{thm:main} implies that the condition `` no monochromatic triangles '' is quite strong since we can almost obtain the `` vertex-pancyclic '' result without using the color degree condition, which is required in Conjecture \ref{con:Fujita} and Theorem \ref{thm:Chen}. 
	When Theorem \ref{thm:main}$(b)$ or $(c)$ happens, it is easy to verify that the minimum color degree of $G$ must be smaller than $(n+1)/2$. Combining Theorem \ref{thm:main} with the result by Fujita and Magnant \cite{Fujita:2011} that each vertex is contained in a PC triangle, we can confirm  Conjecture \ref{con:Fujita} in the case that no monochromatic triangle exists, namely, obtain Theorem \ref{thm:Chen} as a corollary.
	
%
%
%
	
	The other conjecture on the existence of PC Hamilton cycles was given by Bollob\'{a}s and Erd\H{o}s \cite{Bollobas-Erdos:1976}.
	
	\begin{conjecture}[Bollob\'{a}s and Erd\H{o}s \cite{Bollobas-Erdos:1976}]
		Let $G$ be an edge-colored $K_n$. If $\Delta^{mon}(G)<\lfloor\frac{n}{2}\rfloor$, then $G$ contains a PC Hamilton cycle.
	\end{conjecture}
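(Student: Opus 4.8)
The plan is to attack the conjecture by the rotation--extension method, adapted to the properly colored setting and reinforced by an absorption argument. Write $m=\lfloor n/2\rfloor$, so the hypothesis reads $\Delta(G^i)\le m-1$ for every color $i$. The single observation that drives everything is a counting bound: at any vertex $v$ and for any prescribed color $c$, the number of edges incident with $v$ whose color differs from $c$ is at least $(n-1)-(m-1)=n-m=\lceil n/2\rceil$. Thus whenever we are forced to leave a vertex along an edge avoiding one particular color, we always have at least $\lceil n/2\rceil$ legal choices. Note first that this hypothesis permits monochromatic triangles --- each such triangle has monochromatic degree $2<\lfloor n/2\rfloor$ once $n\ge 6$ --- so Theorem \ref{thm:main} is not directly available and a fresh argument is needed; the goal is to convert the local color abundance above into a global PC Hamilton cycle.

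First I would fix a longest PC path $P=v_0v_1\cdots v_\ell$ and argue, exactly as in Barr's extension argument recalled in the introduction, that if $P$ is not Hamilton then neither endpoint can be extended, so for every vertex $u\notin V(P)$ we have $col(v_0u)=col(v_0v_1)$ and $col(v_\ell u)=col(v_{\ell-1}v_\ell)$. Next I would perform Pósa-type rotations at $v_\ell$: for an interior $v_i$ with $col(v_\ell v_i)\ne col(v_{\ell-1}v_\ell)$ and $col(v_\ell v_i)\ne col(v_iv_{i+1})$, the rotated path $v_0\cdots v_i v_\ell v_{\ell-1}\cdots v_{i+1}$ is again a PC path of the same length with new endpoint $v_{i+1}$. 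Iterating produces a set $R$ of reachable endpoints, and the symmetric operation from $v_0$ produces a set $R'$. The closing step would be to find $a\in R$ and $b\in R'$ joined by an edge whose color is compatible with the final edges of the corresponding rotated paths, giving a PC cycle on $V(P)$; the abundance bound $\lceil n/2\rceil$ should force such a pair once $R$ and $R'$ are large, and an outside vertex could then be inserted using the forced colors $col(v_0u)$, $col(v_\ell u)$ to enlarge the cycle, contradicting maximality.

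The main obstacle --- and the reason this remains a conjecture rather than a routine consequence of the preceding theorems --- is that each rotation carries two color constraints rather than one. Forbidding both $col(v_{\ell-1}v_\ell)$ and $col(v_iv_{i+1})$ can exclude up to $2(m-1)\approx n$ pivots, so the crude bound $\lceil n/2\rceil$ does not by itself guarantee that $R$ and $R'$ grow. Overcoming this requires a more global device: an absorbing structure built in advance (a short PC path able to swallow any small leftover set while preserving proper coloring), whose existence should follow from the same $\lceil n/2\rceil$ abundance applied to pairs of vertices, combined with an accounting that exploits the fact that one of the two forbidden colors is common to all pivots along a single rotation sequence. I expect this line to yield the asymptotic threshold $\Delta^{mon}(G)\le(\tfrac12-o(1))n$ cleanly; pushing it to the exact value $\lfloor n/2\rfloor$ is the genuinely hard part.

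Finally, I would isolate the near-extremal colorings where the abundance is tight, namely those in which some vertex sees a single color on almost $m$ incident edges, and treat them by a stability argument. Here the structural vocabulary of the present paper is useful: Example \ref{exam:K5} and the degenerate sets of Definition \ref{def:non-deg} are precisely the obstructions one anticipates, and the hypothesis constrains them quantitatively --- a degenerate set $S\ne V(G)$ would force some $u\in S$ to send $n-|S|$ identically colored edges outward, so $\Delta^{mon}(G)<\lfloor n/2\rfloor$ already demands $|S|>\lceil n/2\rceil$. Ruling out such large almost-degenerate sets directly, together with the absorption argument for the generic case, would complete the proof.
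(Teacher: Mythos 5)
This statement is the Bollob\'as--Erd\H{o}s conjecture, which the paper records as an open problem rather than proving: the only progress cited is Lo's asymptotic confirmation \cite{Lo:2016+}, and the paper uses the conjecture merely as motivation for Corollary \ref{cor:mono}. So there is no proof in the paper to compare yours against, and your proposal --- by its own admission (``should force'', ``I expect'', ``the genuinely hard part'') --- is a research program, not a proof. To be clear about where it stops being an argument: (i) the rotation step is misstated --- for the rotated path $v_0\cdots v_iv_\ell v_{\ell-1}\cdots v_{i+1}$ to be properly colored, the pivot edge $v_iv_\ell$ must avoid $col(v_{i-1}v_i)$, the edge retained at $v_i$, not $col(v_iv_{i+1})$, the edge deleted; (ii) the central obstacle you correctly identify --- each pivot carries two color constraints, potentially killing $2(\Delta^{mon}(G)-1)\approx n$ candidates, so the endpoint sets $R,R'$ need not grow at all --- is never overcome: the ``absorbing structure'' that is supposed to rescue the argument is asserted to exist but never constructed, and building such absorbers under the exact bound $\Delta^{mon}(G)<\lfloor n/2\rfloor$ is precisely what is open (Lo's asymptotic proof does use absorption, which is why your plan plausibly recovers $(\tfrac12-o(1))n$ but not the conjecture itself); (iii) the closing insertion step is unsound --- even after producing a PC cycle on $V(P)$, an outside vertex need not be insertable, and the paper's Lemma \ref{lem:not ext} catalogues exactly the failure modes (all edges from $v$ to $C$ of one color, or matching all predecessor or all successor colors), none of which is excluded by the monochromatic degree hypothesis when the cycle is short.

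One smaller point: your quantitative remark on degenerate sets is fine as far as it goes --- each $u\in S$ sends $n-|S|$ edges of color $f(u)$ to $V(G)\setminus S$, so $\Delta^{mon}(G)<\lfloor n/2\rfloor$ forces $|S|>\lceil n/2\rceil$, consistent with the paper's observation that outcomes (b) and (c) of Theorem \ref{thm:main} entail $\Delta^{mon}(G)\geq\lfloor n/2\rfloor$ --- but ``ruling out such large almost-degenerate sets directly'' is again a stated intention, not an argument. Also note that Theorem \ref{thm:main} genuinely cannot be invoked here, as you say, since the hypothesis permits monochromatic triangles; so nothing in this paper shortens the route. As it stands, the proposal reproduces the known strategy for the asymptotic version and leaves every step that distinguishes the exact conjecture from it unproved.
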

	Lo \cite{Lo:2016+} confirmed this conjecture asymptotically. When Theorem \ref{thm:main}$(b)$ or $(c)$ happens, it is also easy to verify that $\Delta^{mon}(G)\geq \lfloor\frac{n}{2}\rfloor$. So we can obtain the following corollary.
	\begin{cor}\label{cor:mono}
		Let $G$ be an edge-colored $K_n$. If $\Delta^{mon}(G)<\lfloor\frac{n}{2}\rfloor$ and $G$ contains no monochromatic triangles, then each vertex of $G$ is contained in PC cycles of all lengths from $4$ to $n$. 
	\end{cor}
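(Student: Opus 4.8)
The plan is to deduce Corollary \ref{cor:mono} directly from Theorem \ref{thm:main}. Given an edge-colored $K_n$ containing no monochromatic triangle, Theorem \ref{thm:main} guarantees that one of the alternatives $(a)$, $(b)$, $(c)$ holds. Since alternative $(a)$ is precisely the desired conclusion, it suffices to show that under the extra hypothesis $\Delta^{mon}(G)<\lfloor n/2\rfloor$ neither $(b)$ nor $(c)$ can occur. I would therefore establish the contrapositive of each: that $(b)$ and $(c)$ each force $\Delta^{mon}(G)\geq\lfloor n/2\rfloor$.

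Alternative $(c)$ is immediate. If $G$ is a $K_5$ formed by two edge-disjoint monochromatic pentagons, then the five edges of one pentagon share a single color and every vertex lies on exactly two of them, so that color has maximum degree $2=\lfloor 5/2\rfloor$ in $G$; hence $\Delta^{mon}(G)\geq 2$, which already contradicts the hypothesis for $n=5$.

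The substantive case is $(b)$. Suppose $G$ has a degenerate set $S\neq V(G)$ with compatible function $f$, and write $s=|S|$, so $1\le s\le n-1$. For a vertex $u\in S$, Definition \ref{def:non-deg}(ii) forces each of the $n-s$ edges from $u$ to $V(G)\setminus S$ to receive color $f(u)$, while part (i) forces every edge $uv$ inside $S$ to receive color $f(u)$ or $f(v)$. The idea is to locate a single vertex $u$ at which the color $f(u)$ already appears on at least $\lfloor n/2\rfloor$ incident edges. Letting $a_u$ denote the number of edges $uv$ with $v\in S$ and $col(uv)=f(u)$, each edge of the complete graph on $S$ contributes to $a_u$ for at least one endpoint, so $\sum_{u\in S}a_u\geq\binom{s}{2}$ and some $u\in S$ satisfies $a_u\geq (s-1)/2$. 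For that vertex the color $f(u)$ is carried by at least $a_u+(n-s)\geq n-\tfrac{s+1}{2}$ incident edges, and since $s\le n-1$ this is at least $n/2\geq\lfloor n/2\rfloor$. Thus $\Delta^{mon}(G)\geq\lfloor n/2\rfloor$, again contradicting the hypothesis.

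The only point requiring care is this last estimate: the naive bound counting only the $n-s$ edges leaving $S$ degrades when $S$ is large, so averaging over the edges inside $S$ is needed to keep the guarantee uniform all the way up to $s=n-1$. Once both $(b)$ and $(c)$ are ruled out, alternative $(a)$ remains, and every vertex lies on PC cycles of all lengths from $4$ to $n$, completing the proof.
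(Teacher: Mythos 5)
Your proposal is correct and follows exactly the paper's route: apply Theorem \ref{thm:main} and rule out alternatives $(b)$ and $(c)$ by showing each forces $\Delta^{mon}(G)\geq\lfloor\frac{n}{2}\rfloor$, which the paper states is ``easy to verify'' without giving details. Your averaging argument over the edges inside the degenerate set $S$ (yielding a vertex $u$ with $a_u\geq\frac{s-1}{2}$ and hence monochromatic degree at least $n-\frac{s+1}{2}\geq\frac{n}{2}$) is a valid and complete way to supply the verification the paper omits.
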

    The absence of PC triangles in Corollary \ref{cor:mono} is reasonable. Since there exist an infinite number of edge-colored $K_n$ with $\Delta^{mon}(K_n)=2n/5$ and containing no PC triangles (see \cite{Gyarfas: 2004} for the construction). 
    
	For general edge-colored graphs (not necessarily complete), Lo \cite{Lo:2014-2} proved the following: for any $\varepsilon>0$, there exists an integer $n_{0}=n_{0}(\varepsilon)$ such that every edge-colored graph $G$ with $n\geq n_{0}$ and $\delta^{c}(G)\geq(\frac{2}{3}+\varepsilon)|G|$ contains PC cycles of all lengths from $3$ to $|G|$. For other results related to PC Hamilton cycles, we refer the reader to \cite{Lo:2016+, Lo:2019}.	
	
	\section{Preliminaries}
	To deliver the proof of Theorem 1, we need some auxiliary lemmas.	First two lemmas are concerning directed cycles in tournaments and multipartite tournaments, which will be useful when dealing with PC cycles in a degenerate edge-colored $K_n$. Given a digraph $D$. If $uv$ is an arc of $D$, then we say $u$ {\it dominates} $v$, and denote it by $u\rightarrow v$. For two disjoint subsets $A,B$ of $V(D)$, if each arc $uv$ between $A$ and $B$ satisfies that $u\in A$ and $v\in B$, then we say $A\rightarrow B$. For $H\subset D$ and $S\subseteq V(D)\setminus V(H)$, we use $H\rightarrow S$ ($S\rightarrow H$) to denote $V(H)\rightarrow S$ ($S\rightarrow V(H)$). If $|S|=1$, say $S=\{v\}$, then we use $H\rightarrow v$ ($v\rightarrow H$) to denote $V(H)\rightarrow \{v\}$ ($\{v\}\rightarrow V(H)$). For a vertex $u\in V(D)$, denote by $N^+(u)$ the set of vertices that are dominated by $u$, and denote by $N^-(u)$ the set of vertices that are dominating $u$. For other terminology and notation on digraphs, we refer the reader to \cite{Bang-Gutin:2008}.

	\begin{lemma}[Moon \cite{Moon:1966}]\label{lem:Moon}
		Each vertex in a strongly connected tournament of order $n$ is contained in directed cycles of all lengths from $3$ to $n$.
	\end{lemma}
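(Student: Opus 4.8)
The plan is to prove the statement by induction on the cycle length: once a vertex $v$ is shown to lie on a directed cycle of length $k$ with $3\le k<n$, I will produce a directed cycle of length $k+1$ through $v$. Combined with a base case that puts every vertex on a $3$-cycle, this gives directed cycles of all lengths from $3$ to $n$ through $v$. Throughout let $T$ denote the strongly connected tournament and fix a vertex $v$.

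For the base case I would use strong connectivity to see that both $N^+(v)$ and $N^-(v)$ are nonempty, since otherwise $v$ would be a source or a sink and could reach, or be reached by, no other vertex. I then claim there must be an arc from $N^+(v)$ to $N^-(v)$: if instead $N^-(v)\rightarrow N^+(v)$, then every out-arc of a vertex in $N^+(v)$ would stay inside $N^+(v)$ (such a vertex cannot dominate $v$ nor any vertex of $N^-(v)$), so $N^+(v)$ could never reach $v$, contradicting strong connectivity. Choosing an arc $w\rightarrow w'$ with $w\in N^+(v)$ and $w'\in N^-(v)$ yields the triangle $v\rightarrow w\rightarrow w'\rightarrow v$.

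For the inductive step, suppose $v=v_1$ lies on the directed cycle $C=v_1v_2\cdots v_kv_1$ with $k<n$, and set $R=V(T)\setminus V(C)\neq\emptyset$. The easy case is when some $u\in R$ has both an out-neighbor and an in-neighbor on $C$; scanning cyclically around $C$ then locates consecutive vertices with $v_i\rightarrow u\rightarrow v_{i+1}$, and replacing the arc $v_iv_{i+1}$ by the path $v_iuv_{i+1}$ inserts $u$ and produces a $(k+1)$-cycle still containing $v$. The remaining case is that every $u\in R$ satisfies either $C\rightarrow u$ or $u\rightarrow C$; here I partition $R=A\cup B$ with $A=\{u:C\rightarrow u\}$ and $B=\{u:u\rightarrow C\}$. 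Both $A$ and $B$ are nonempty, for if $B=\emptyset$ then no arc leaves $A$ toward $C$ and $A$ cannot reach $C$ (symmetrically for $A$). By the same reasoning there exist $a\in A$ and $b\in B$ with $a\rightarrow b$: otherwise $B\rightarrow A$ together with $B\rightarrow C$ and $C\rightarrow A$ would make $B$ a set receiving no incoming arc from $V(T)\setminus B$, again violating strong connectivity. Then $b\rightarrow v_1$, the arcs $v_1\rightarrow\cdots\rightarrow v_{k-1}$ of $C$, the arc $v_{k-1}\rightarrow a$, and $a\rightarrow b$ form the cycle $bv_1v_2\cdots v_{k-1}ab$ of length $k+1$, which contains $v=v_1$.

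I expect the non-insertable case to be the main obstacle. Establishing that both $A$ and $B$ are nonempty and that a ``backward'' arc $a\rightarrow b$ exists rests entirely on careful applications of strong connectivity, and one must take care that the new $(k+1)$-cycle retains the prescribed vertex $v$ — which is exactly why $v_k$, and not $v_1$, is the vertex of $C$ that gets dropped when $a$ and $b$ are inserted.
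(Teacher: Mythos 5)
Your proof is correct: the base case (an arc from $N^+(v)$ to $N^-(v)$ forced by strong connectivity yields a triangle through $v$) and the inductive step (either insert an outside vertex between consecutive cycle vertices, or use the partition $R=A\cup B$ with an arc $a\rightarrow b$ from $A$ to $B$ to trade $v_k$ for the pair $a,b$) together constitute the classical argument for Moon's theorem, with all applications of strong connectivity carried out carefully and the preservation of the prescribed vertex $v$ handled explicitly. Note that the paper itself offers no proof of this lemma --- it is quoted directly from Moon's 1966 paper --- so your write-up is a self-contained reconstruction of the standard proof rather than a departure from anything in the text.
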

	\begin{lemma}\label{lem:directed}
		For $t\geq 1$, let $T$ be a strongly connected $k$-partite tournament with partite sets $V_1,V_2,\ldots,V_k$ with $V_i=\{x_i,y_i\}$, $V_{j}=\{v_j\}$ for all $i\in [1,t]$ and $j\in [t+1,k]$. If $|V(T)|\geq 4$ and $N^+(x_i)\cap N^+(y_i)=\emptyset$ for all $i\in [1,t]$, then each vertex of $T$ is contained in directed cycles of all lengths from $4$ to $|V(T)|$.
	\end{lemma}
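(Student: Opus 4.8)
The plan is to prove the statement by induction on \(t\), the number of two-element parts, using Moon's theorem (Lemma \ref{lem:Moon}) as the base. When \(t=0\) the digraph \(T\) is an ordinary strongly connected tournament on at least four vertices, so Lemma \ref{lem:Moon} already gives directed cycles of all lengths from \(3\) (hence from \(4\)) to \(|V(T)|\) through every vertex. For the inductive step I single out one two-element part, say \(V_t=\{x_t,y_t\}\), and classify every \(w\notin V_t\) by its two arcs to the pair: let \(A=\{w:w\to x_t,\ w\to y_t\}\), \(B=\{w:w\to x_t,\ y_t\to w\}\) and \(C=\{w:x_t\to w,\ w\to y_t\}\). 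The hypothesis \(N^+(x_t)\cap N^+(y_t)=\emptyset\) says precisely that the fourth type \(\{w:x_t\to w,\ y_t\to w\}\) is empty, so \(V(T)\setminus V_t=A\sqcup B\sqcup C\) with \(N^+(x_t)=C\), \(N^+(y_t)=B\), \(N^-(x_t)=A\cup B\) and \(N^-(y_t)=A\cup C\).

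I then contract \(V_t\) to a single vertex \(u_t\), orienting \(w\to u_t\) when \(w\in A\) and \(u_t\to w\) when \(w\in B\cup C\). First I check the hypotheses are inherited: the contraction of a strongly connected digraph is strongly connected, and for any surviving part \(V_i\) the disjointness \(N^+(x_i)\cap N^+(y_i)=\emptyset\) is preserved, the only new candidate for a common out-neighbour being \(u_t\); but \(u_t\in N^+(x_i)\cap N^+(y_i)\) would force \(x_i,y_i\) both into \(A\), hence both dominating \(x_t\), contradicting \(N^+(x_i)\cap N^+(y_i)=\emptyset\) in \(T\). Thus the contracted digraph \(T^{*}\) is again a multipartite tournament of the required type, with one fewer two-element part and \(|V(T)|-1\) vertices, and the inductive hypothesis (or Lemma \ref{lem:Moon} when no two-element part remains) yields, through each of its vertices, directed cycles of all lengths from \(4\) to \(|V(T)|-1\). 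The decisive feature of the contraction rule is that it lifts locally: on a \(T^{*}\)-cycle \(\dots\to a\to u_t\to b\to\dots\) one has \(a\in A\) and \(b\in B\cup C\), so replacing \(u_t\) by \(x_t\) when \(b\in C\) and by \(y_t\) when \(b\in B\) produces a genuine \(T\)-cycle of the same length through one vertex of the pair.

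The heart of the argument, and the step I expect to be the main obstacle, is to upgrade these lifts so as to realise every length from \(4\) to \(|V(T)|\) through each of \(x_t\) and \(y_t\) individually. For the top length \(|V(T)|\) I would start from a lifted cycle on \(V(T)\setminus\{y_t\}\) containing \(x_t\) and reinsert \(y_t\): I need a consecutive pair \(c_i\to c_{i+1}\) with \(c_i\in N^-(y_t)=A\cup C\) and \(c_{i+1}\in N^+(y_t)=B\). Reading the cycle until the first \(B\)-vertex, its predecessor cannot be \(x_t\), since a \(B\)-vertex dominates \(x_t\) and so is not dominated by it, and it cannot be another \(B\)-vertex by minimality; as \(x_t\) is the only cycle vertex non-adjacent to \(y_t\), the predecessor lies in \(A\cup C\), giving the required slot. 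This is exactly where \(N^+(x_t)\cap N^+(y_t)=\emptyset\) is used: it is what makes \(x_t\) unable to point into \(B=N^+(y_t)\), and thereby prevents the single ``neutral'' partner from blocking the insertion. The same mechanism, applied symmetrically and to the intermediate lengths after first securing, for the prescribed length, a \(T^{*}\)-cycle leaving \(u_t\) into \(C\) (respectively \(B\)) — which I would extract from the vertex-pancyclicity and strong connectivity of \(T^{*}\), using that \(C=N^+(x_t)\) and \(B=N^+(y_t)\) are nonempty — lets me steer each lift through the prescribed vertex of the pair. The remaining routine points are the small base cases (notably \(|V(T)|=4\), and the situation where the contraction drops the order to three) and checking that every vertex lies on some \(4\)-cycle to seed the construction; these I would dispatch by a direct case analysis.
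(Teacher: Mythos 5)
Your induction collapses at its very first step: the claim that ``the contraction of a strongly connected digraph is strongly connected'' is false for the contraction rule you use. Under your rule the in-neighbourhood of $u_t$ in $T^{*}$ is exactly $A=N^-(x_t)\cap N^-(y_t)$, and nothing in the hypotheses forces $A\neq\emptyset$. Concretely, take $t=1$, $k=3$, $V_1=\{x_1,y_1\}$, $V_2=\{b\}$, $V_3=\{c\}$ with arcs $x_1\rightarrow c$, $c\rightarrow y_1$, $y_1\rightarrow b$, $b\rightarrow x_1$, $b\rightarrow c$. This $T$ is strongly connected, satisfies $N^+(x_1)\cap N^+(y_1)=\{c\}\cap\{b\}=\emptyset$, and indeed satisfies the lemma (via the quadrangle $x_1cy_1bx_1$), yet $A=\{b\}\cap\{c\}=\emptyset$, so in $T^{*}$ the vertex $u_1$ has in-degree $0$. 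This is not a small boundary case: taking $B=\{b_1,\ldots,b_m\}$ and $C=\{c_1,\ldots,c_m\}$ as singleton parts with $b_i\rightarrow x_1$, $y_1\rightarrow b_i$, $x_1\rightarrow c_i$, $c_i\rightarrow y_1$ and arbitrary arcs inside $B\cup C$ gives arbitrarily large strongly connected examples with $A=\emptyset$. The obstruction is structural: whenever $x_t$ and $y_t$ play complementary roles in the strong connectivity (all in-flow to the pair enters through one vertex relative to $B$, all out-flow leaves through the other), no single vertex $u_t$ with one fixed orientation toward each outside vertex can inherit strong connectivity, so the inductive hypothesis can never be invoked. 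A secondary (and in principle repairable) weakness: even where $T^{*}$ is strongly connected, your lift only produces a cycle through one \emph{unspecified} member of the pair, and ``securing, for the prescribed length, a $T^{*}$-cycle leaving $u_t$ into $C$ (respectively $B$)'' is asserted rather than proved; vertex-pancyclicity of $T^{*}$ gives no control over the successor of $u_t$ on the cycle.

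For comparison, the paper's proof avoids contraction entirely. It first shows directly that every vertex lies on a directed quadrangle (a case analysis, with Lemma \ref{lem:Moon} applied to a suitable strongly connected sub-tournament in the hardest case), and then proves an extension statement in the spirit of Moon's argument: if $u^*$ lies on an $\ell$-cycle $C$ but on no $(\ell+1)$-cycle, one partitions $V(T)\setminus V(C)$ according to adjacency and domination relative to $C$ and contradicts strong connectivity. Your insertion argument for the top length (locating an arc of the cycle from $N^-(y_t)$ into $N^+(y_t)$) is sound and is in fact the same mechanism as the paper's extension step; if you want to salvage the write-up, that ingredient, developed so that it extends cycles of every length through every vertex rather than merely reinserting the second member of one pair, is the proof --- but the contraction scaffolding around it has to go.
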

	\begin{proof}
		
		We firstly prove that each vertex of $T$ is contained in a directed quadrangle. 	
		
		If $t=1$, then note that $T$ is strongly connected. We assume that $x_1\rightarrow v_i$ and $y_1\rightarrow v_j$. Since $N^+(x_1)\cap N^+(y_1)=\emptyset$, we have $v_i\neq v_j$ and $x_1v_iy_2v_jx_i$ is a directed quadrangle containing $\{x_1, y_1\}$.
		If $t\geq 2$, then consider the bipartite tournament $B$ induced by $(V_1,V_2)$. Since $N^+(x_i)\cap N^+(y_i)=\emptyset$ for all $i\in [1,t]$, we have $d^+_B(v)\geq 1$ for each $v\in V(B)$. Thus $B$ contains a directed cycle, which must be a directed quadrangle containing $\{x_1, y_1\}$. 	
		By the symmetry between $\{x_1,y_1\}$ and $\{x_i,y_i\}$ $(i\in [1,t])$, we can see that each vertex in $\cup_{i=1}^tV_i$ is contained in a directed quadrangle. 
		
		Now we prove that each $v_j$ ($t+1\leq j\leq k$) is also contained in a directed quadrangle. By the above analysis, we can assume that $x_1ay_1bx_1$ is a directed quadrangle containing $\{x_1,y_1\}$, and $\{a,b\}=\{x_2,y_2\}$ when $t\geq 2$. For each $j\in[t+1,k]$, since $N^+(x_1)\cap N^+(y_1)=\emptyset$, either $v_j\rightarrow x_1$ or $v_j\rightarrow  y_1$. By the symmetry between $x_1$ and $y_1$, assume that $v_j\rightarrow x_1$. 
		If  $y_1\rightarrow  v_j$, then either $v_j\in \{a,b\}$, which leads $v_j$ lying in a directed quadrangle,  or $v_j\not\in \{a,b\}$, which implies $v_jx_1ay_1v_j$ is a directed quadrangle. So we assume that  $v_j\rightarrow x_1$ and $v_j\rightarrow y_1$.
\begin{itemize}
	\item  If $\{a,b\}\cap N^+(v_j)\neq \emptyset$ and $\{a,b\}\cap N^-(v_j)\neq \emptyset$, then either $v_jbx_1av_j$ or $v_jay_1bv_j$ is a directed quadrangle.
	\item  If $\{a,b\}\subseteq N^-(v_j)$, then $v_j\in N^+(a)\cap N^+(b)$, which implies $t=1$. Consider the arc between $a$ and $b$. Then either $v_jy_1bav_j$ or $v_jx_1abv_j$ is a directed quadrangle. 
	
\end{itemize}
		By the symmetry between $\{x_1,y_1\}$ and $\{x_i,y_i\}$ $(i\in [1,t])$, the only case left is that $v_j\rightarrow \cup_{i=1}^tV_i\cup\{a,b\}$. Define $ U=\cup_{i=1}^tV_i\cup\{a,b\}.$	
		Since $T$ is strongly connected, there exists a directed path from $U$ to $v_j$. Let $P=z_0z_1z_2\cdots z_sv_j$ be a directed path with $z_0\in U$ and $z_i\in \{v_{t+1}, v_{t+2}, \ldots, v_k\}$ for all $i\in [1,s]$. Since there exists a directed quadrangle in $T[U]$ containing $z_0$, we can find a vertex $x\in U$ such that $x\rightarrow z_0$. Let $P'=xP$. Note that $v_j\rightarrow U$. So $s\geq 1$ and $v_j\rightarrow x$. Thus $|P'|\geq 4$ and $T[V(P')]$ is a strongly connected tournament of order at least $4$. By Lemma \ref{lem:Moon}, $v_j$ is contained in a directed quadrangle.
		
		Let $u^*$ be an arbitrary vertex in $T$ and $\ell$ an arbitrary integer in $[4,|V(T)|-1]$. To complete the proof, we need to show that if there exists a directed cycle of length $\ell$ containing $u^*$, then we can obtain a directed cycle of length $\ell+1$ containing $u^*$. 
		
		By contradiction. Suppose that $C=u_1u_2\cdots u_{\ell}u_1$  is a PC cycle containing $u^*$, but $u^*$ is not contained in any PC cycle of length $\ell+1$. Let $S=V(T)\setminus V(C)$. 
For two distinct vertices $u_i,u_j\in V(C)$, use $u_iCu_j$ to denote the directed path on $C$ from $u_i$ to $u_j$. 
		Define
		$$S_1=\{x\in S: \exists y\in V(C)~s.t.~x \text{~is not adjacent to~} y\},$$
		$$S_2=\{x\in S\setminus S_1: C\rightarrow x\},$$
		$$S_3=\{x\in S\setminus S_1:  x\rightarrow C\},$$
		$$S_0=S\setminus \cup_{i=1}^3S_i.$$
		\begin{claim} \label{cl:lem1}
		$C\rightarrow S_1$ when $S_1\neq \emptyset$.
		\end{claim}
	\begin{proof}
		Let $x$ be an arbitrary vertex in $S_1$. By the definition of $S_1$, there is a vertex $y\in V(C)$, say $y=u_1$, such that $x$ and $y$ come from a same partite set. Then $x$ is incident to all the vertices on $C-y$. Note that $N^+(x)\cap N^+(y)=\emptyset$ and $y\rightarrow u_2$. So $u_2\rightarrow x$. Since $xu_3Cyu_2x$ is not a directed cycle, we have $u_3\rightarrow x$. Repeat this process. We finally get $C\rightarrow x$. Hence $C\rightarrow S_1$  when $S_1\neq \emptyset$.
	\end{proof}
	\begin{claim}
		$S_3\rightarrow S_1\cup S_2$.
	\end{claim}	
    \begin{proof}
	    Suppose to the contrary that there exists a vertex $z\in S_3$ and $x\in S_1\cup S_2$ such that $x\rightarrow z$. Since $\ell \geq 4$, there exists a vertex $u_i\in V(C)$ such that $u_i\neq u^*$ and $u_{i-1}$ is adjacent to $x$. By Claim \ref{cl:lem1} and the definitions of $S_2$ and $S_3$, we can see that $u_{i-1}xzu_{i+1}Cu_{i-1}$ is a directed cycle of length $\ell+1$ containing $u^*$, a contradiction. 
    \end{proof}

		If $S_0=\emptyset$, then $S=S_1\cup S_2\cup S_3$. Since $S\neq \emptyset$, either $S_1\cup S_2\neq \emptyset$ or $S_3\neq\emptyset$. Therefore either $V(C)\cup S_3\rightarrow S_1\cup S_2$ or $S_3\rightarrow V(C)\cup S_1\cup S_2$. In both cases, we can see that $T$ is not strongly connected, a contradiction.
		
		If $S_0\neq \emptyset$, then let $v\in S_0$. We can see that $v$ is incident to all the vertices of $C$ with $N^+(v)\cap V(C)\neq \emptyset$ and $N^-(v)\cap V(C)\neq \emptyset$. Hence there must exist vertices $u_i\in N^-(v)$ and $u_{i+1}\in N^+(v)$ (indices are taken modulo $\ell$). So $u_ivu_{i+1}Cu_i$ a directed cycle of length $\ell+1$ containing $u^*$, our final contradiction.
	\end{proof}
	Next two lemmas deal with PC cycles in edge-colored $K_n$. Let $C$ be a cycle in an undirected graph $G$. Give $C$ a direction (either clockwise or  anti-clockwise). For a vertex $u\in V(C)$, use $u^+$ and $u^-$, respectively, to denote the successor and predecessor of $u$ on $C$ along the direction of $C$. Let $u^{++}=(u^+)^+$ and $u^{--}=(u^-)^-$. For $u,v\in V(C)$, we use $uCv$ to denote the segment between $u$ and $v$ along the direction of $C$ and use $u\bar{C}v$ to denote the the segment between $u$ and $v$ along the inverse direction of $C$.
		\begin{lemma}\label{lem:not ext}
		Let $G$ be an edge-colored $K_n$ containing no monochromatic triangles. Let $C$ be a PC cycle in $G$ with a given direction and $v$ a vertex in $G-C$. If there is no PC cycle $C'$ satisfying $ V(C')=V(C)\cup \{v\}$. Then one of the following statements holds:
		($a$) $|col(v,C)|=1$;
		($b$)  $col(vu)=col(uu^-)$ for each vertex $u\in V(C)$;
		($c$) $col(vu)=col(uu^+)$ for each vertex $u\in V(C)$.
	\end{lemma}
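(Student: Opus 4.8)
The plan is to exploit the hypothesis in its weakest useful form: since no PC cycle on $V(C)\cup\{v\}$ exists, in particular $v$ cannot be inserted into any single edge of $C$. First I would record exactly when such an insertion fails. Inserting $v$ between $u$ and $u^+$ turns $C$ into the closed walk obtained by replacing the edge $uu^+$ with the path $uvu^+$; this is a PC cycle precisely when $col(vu)\neq col(vu^+)$ (properness at $v$), $col(vu)\neq col(u^-u)$ (properness at $u$) and $col(vu^+)\neq col(u^+u^{++})$ (properness at $u^+$). Hence for every $u\in V(C)$ at least one of the following holds: $col(vu)=col(vu^+)$, or $col(vu)=col(u^-u)$, or $col(vu^+)=col(u^+u^{++})$.

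To organize this, I would call a vertex $w\in V(C)$ \emph{backward-matched} if $col(vw)=col(w^-w)$ and \emph{forward-matched} if $col(vw)=col(ww^+)$. Since $C$ is PC we have $col(w^-w)\neq col(ww^+)$, so no vertex is simultaneously backward- and forward-matched. In this language the three alternatives above say: for every edge $uu^+$, either $col(vu)=col(vu^+)$, or $u$ is backward-matched, or $u^+$ is forward-matched. Moreover conclusion $(a)$ reads ``$col(vu)=col(vu^+)$ for all $u$'', conclusion $(b)$ reads ``every vertex is backward-matched'', and conclusion $(c)$ reads ``every vertex is forward-matched''. I would then assume $(a)$ fails, so that some edge $uu^+$ satisfies $col(vu)\neq col(vu^+)$; for this edge the blocking condition forces $u$ to be backward-matched or $u^+$ to be forward-matched, and I would show that each such seed propagates around the whole cycle to yield $(b)$ or $(c)$ respectively.

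The key step, and the only place the hypothesis ``no monochromatic triangle'' is used, is the propagation. Suppose $w$ is forward-matched, so $col(vw)=col(ww^+)$, and examine the edge $ww^+$. As $w$ cannot also be backward-matched, the blocking condition for this edge reduces to $col(vw)=col(vw^+)$ or ``$w^+$ forward-matched''. But if $col(vw)=col(vw^+)$, then $col(vw^+)=col(vw)=col(ww^+)$ makes the triangle $vww^+$ monochromatic, which is forbidden; hence $w^+$ is forward-matched. Iterating around $C$ shows every vertex is forward-matched, i.e. $(c)$ holds. The backward-matched case is symmetric: if $w$ is backward-matched then examining $w^-w$ and noting that $col(vw)=col(vw^-)$ would force the monochromatic triangle $vw^-w$ shows $w^-$ is backward-matched, and iterating gives $(b)$.

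The main obstacle is not the propagation step itself but ensuring that the seed produced when $(a)$ fails has a single, definite type and that the chain genuinely closes up around the cycle rather than stalling. The monochromatic-triangle hypothesis is precisely what prevents an ``equal-color'' edge from interrupting the chain, so the care goes into verifying that on every edge the only way to block insertion without creating such a forbidden triangle is to extend the current type by one more vertex.
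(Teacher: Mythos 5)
Your proof is correct and follows essentially the same route as the paper's: both seed from an edge $uu^+$ with $col(vu)\neq col(vu^+)$, use the failure of inserting $v$ into a single edge of $C$ to get the three blocking alternatives, and propagate the matching around the cycle using the no-monochromatic-triangle hypothesis to exclude the ``equal colors at $v$'' case. The only cosmetic difference is that you treat the forward and backward propagation explicitly where the paper invokes a without-loss-of-generality symmetry of direction.
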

	\begin{proof}
		If $|col(v,C)|\geq 2$, then there is a vertex $x\in V(C)$ such that $col(vx)\neq col(vx^+)$. Since $vx^+Cxv$ is not a PC cycle, either $col(vx)=col(xx^-)$ or $col(vx^+)=col(x^+x^{++})$. Without lose of generality, assume that $col(vx)=col(xx^-)$. This forces $col(vx^{-})=col(x^-x^{--})$ (otherwise, we either obtain a monochromatic triangle or a PC cycle on $V(C)\cup\{v\}$). Now  $col(vx^{-})\neq col(vx)$ and  $col(vx^{-})=col(x^-x^{--})$. Repeat this process. We can finally get $col(vu)=col(uu^-)$ for each vertex $u\in V(C)$.
	\end{proof}
	\setcounter{claim}{0}
	\begin{lemma}\label{lem:C4}
		Let $G$ be a non-degenerate edge-colored $K_n~(n\geq 4)$ containing no monochromatic triangles. Then each vertex of $G$ is contained in a PC quadrangle.
	\end{lemma}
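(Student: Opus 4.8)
The plan is to prove Lemma~\ref{lem:C4} by contradiction, exploiting the fact that $G$ is non-degenerate to locate the structure forced by the absence of a PC quadrangle through any fixed vertex. First I would pick an arbitrary vertex $v$ and suppose, toward a contradiction, that $v$ lies in no PC quadrangle. The natural starting point is to find \emph{some} PC cycle through $v$: since $G=K_n$ with $n\geq 4$, there are at least three edges at $v$, and if two of them, say $vx$ and $vy$, have different colors, then the triangle $vxyv$ together with the edge $xy$ gives us useful local information. More precisely, I would look at a shortest PC cycle through $v$ and aim to shorten it down to length $4$ using Lemma~\ref{lem:not ext} in reverse: if $C$ is a PC cycle of length $\ell\geq 5$ through $v$ with no PC cycle on a proper vertex subset, the obstruction described in Lemma~\ref{lem:not ext} should propagate a rigid color pattern that I can then contradict.

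The cleaner route, however, is probably to build the quadrangle directly from color-degree considerations at $v$. Since $G$ has no monochromatic triangle, no color can dominate a neighborhood too heavily, so I would first argue $d^c_G(v)\geq 2$ for every vertex (otherwise all edges at $v$ share a color, and I would try to show this forces a degenerate set, contradicting non-degeneracy). Given $d^c_G(v)\geq 2$, choose vertices $a,b$ with $col(va)\neq col(vb)$. To close a quadrangle $vaxbv$ I need a fourth vertex $x\neq v,a,b$ with $col(ax)\neq col(va)$, $col(ax)\neq col(xb)$, and $col(xb)\neq col(vb)$. The key step is to show that if \emph{no} such $x$ exists for \emph{any} admissible choice of $a,b$, then the color pattern around $v$ is so constrained that the set $S=\{v\}$, or a slightly larger set built around $v$, satisfies Definition~\ref{def:non-deg}, making $G$ semi-degenerate or degenerate.

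The main obstacle I anticipate is precisely this final structural extraction: converting the purely local failure ``$v$ is in no PC quadrangle'' into the global statement ``$G$ has a degenerate set,'' which is what non-degeneracy forbids. I expect the argument to hinge on Lemma~\ref{lem:not ext} applied to a triangle or to a longer cycle: the three conclusions $(a)$, $(b)$, $(c)$ of that lemma each describe a monochromatic-type or shift-type coloring of the edges from $v$ into a cycle, and I would need to show each of these, when it holds for all relevant cycles through $v$, forces the function $f$ and set $S$ of the degeneracy definition. Case $(a)$, where $|col(v,C)|=1$, looks most directly tied to degeneracy, since a vertex joined to everything by a single color is the prototype of a degenerate vertex; cases $(b)$ and $(c)$ will require tracking the induced color on the cycle edges and ruling out monochromatic triangles to pin the pattern down.

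I would organize the proof so that the base case is establishing a single PC cycle of length at least $4$ through $v$ (using the completeness of $G$ and the monochromatic-triangle-free hypothesis), then repeatedly invoke Lemma~\ref{lem:not ext} to either shorten a longer PC cycle to a quadrangle or to read off a forbidden global coloring. The expected payoff is that every branch either produces the desired PC quadrangle through $v$ or exhibits a degenerate set $S\subsetneq V(G)$ (or $S=V(G)$), both of which contradict the non-degeneracy assumption, completing the proof.
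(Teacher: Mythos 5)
There is a genuine gap: your proposal correctly identifies the overall shape of the argument (contradiction, then use non-degeneracy by exhibiting a degenerate set), which is indeed the paper's strategy, but the entire technical content is left unproven. The step you flag as ``the key step'' --- converting the failure ``no PC quadrangle through $v$'' into an explicit degenerate set --- \emph{is} the proof, and your sketch offers no mechanism for it. The paper does this by partitioning $V(G)-v$ into color classes $V_1,\ldots,V_k$ according to $col(uv)$, proving a quadruple claim (if $col(v,\{x,y\})\cap col(v,\{z,w\})=\emptyset$ then $col(xy)\in col(v,\{x,y\})$ or $col(zw)\in col(v,\{z,w\})$, forced by the two non-PC quadrangles $vzxyv$, $vxzwv$ and the absence of monochromatic triangles), deducing $|V_i|=1$ for $i\geq 2$, and then running a three-way case analysis on $|V_1|$ ($=1$, $=2$, $\geq 3$), in each case constructing a concrete degenerate set together with its compatible function $f$. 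Note in particular that the degenerate set produced is generally \emph{not} ``$\{v\}$ or a slightly larger set built around $v$'' as you suggest: in the case $|V_1|\geq 3$ it is $\cup_{i=2}^k V_i\cup S\cup\{v\}$, i.e.\ nearly all of $V(G)$, and pinning down the colors of edges far from $v$ (such as $col(v_iv_j)$ between singleton classes) is where all the work lies.

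Your proposed auxiliary tools also do not function as described. Lemma~\ref{lem:not ext} is an \emph{insertion} lemma: it says that if an outside vertex $v$ cannot be absorbed into a PC cycle $C$ to form a PC cycle on $V(C)\cup\{v\}$, then the edges from $v$ to $C$ follow one of three rigid patterns. It has no ``reverse'' or shortening form, so ``shorten a longer PC cycle to a quadrangle'' is unsupported; in the paper this lemma is used only later, in Theorem~\ref{thm:non-deg}, to \emph{lengthen} cycles once Lemma~\ref{lem:C4} has provided the quadrangle as a base case. Likewise, your ``base case'' of first finding some PC cycle of length at least $4$ through $v$ is not established and is essentially as hard as the lemma itself; and applying Lemma~\ref{lem:not ext} to a triangle through $v$ requires a PC (i.e.\ rainbow) triangle, which need not exist --- the paper points out, via Gallai's theorem, that there are infinitely many non-degenerate colorings of $K_n$ with no monochromatic \emph{and} no PC triangles. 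So every concrete route you propose either presupposes what is to be proved or rests on a tool misread; the correct skeleton is there, but the proof is missing.
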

	\begin{proof}
		By contradiction. Suppose that $v$ is a vertex in $G$ not contained in any PC quadrangle. Let $\{1,2,\ldots,k\}$ be the set of colors appearing on the edges incident to $v$. Since $G$ is non-degenerate, we have $k\geq 2$. Define $V_i=\{u\in V(G): col(uv)=i\}$ for $i=1,2,\ldots,k$ and assume that $|V_1|\geq |V_2|\geq \cdots \geq |V_k|$. 
		\begin{claim}\label{cl:C4}
			For distinct vertices $x,y,z,w$ in $G-v$, if $col(v, \{x,y\}) \cap col(v, \{z,w\})=\emptyset$, then either $col(xy)\in col(v, \{x,y\})$ or $col(zw)\in col(v,\{z,w\})$.
		\end{claim}
		\begin{proof}
			Suppose to the contrary that $col(xy)\not\in col(v, \{x,y\})$ and $col(zw)\not\in col(v,\{z,w\})$. Let $col(xy)=\alpha$, $col(zw)=\beta$. Since $col(v,\{x,y\})\cap col(v, \{z,w\})=\emptyset$, we can assume that $col(vx)=c_1$,  $col(vz)=c_2$ and $c_1\neq c_2$. Note that $vzxyv$ and $vxzwv$ are not PC quadrangles, we have $col(xz)\in  \{\alpha,c_2\}\cap\{c_1,\beta\}$. Recall that $\alpha\neq c_1$ and $\beta\neq c_2$. We get $col(xz)=\alpha=\beta$. Similarly, we can obtain $col(yz)=\alpha=\beta$, which forces $xyzx$ being a monochromatic triangle, a contradiction.
		\end{proof}
		By Claim \ref{cl:C4} and the fact that $G$ contains no monochromatic triangles, we get $|V_2|=1$. Therefore $|V_i|=1$ for all $i\in [2,k]$. Let $v_i$ be the unique vertex in $V_i$ for $i\in [2,k]$. 
		
		If $|V_1|=1$, then $k=n-1$. Let $v_1$ be the unique vertex in $V_1$. Since $G$ is non-degenerate, there are distinct vertices, say $v_1$ and $v_2$, satisfying $col(v_1v_2)\not\in\{1,2\}$. Let
		$col(v_1v_2)=\alpha$. Since $n\geq 4$, the vertex $v_3$ exists. If $col(v_2v_3)=\alpha$, then consider cycles $vv_1v_3v_2v$ and $vv_2v_1v_3v$. We get $col(v_1v_3)\in \{1,\alpha\}\cap\{3,\alpha\}=\{\alpha\}$. 
		Thus $v_1v_2v_3v_1$ is a monochromatic triangle, a contradiction. So $col(v_2v_3)\neq \alpha$. Consider the cycle $vv_1v_2v_3v$. We get $col(v_2v_3)=3$. Note the symmetry between $v_1$ and $v_2$. We can also get $col(v_1v_3)=3$. By similar arguments, we can get $col(v_jv_1)=col(v_jv_2)=j$ for all $j\in[3,k]$.
		For each pair of distinct vertices $v_i$ and $v_j$ with $i,j\geq 3$, apply Claim \ref{cl:C4} to $(v_1,v_2,v_i,v_j)$, we get $col(v_iv_j)\in\{i,j\}$. Thus $\{v_3,v_4,\ldots,v_k\}$ is a degenerate set of $G$, a contradiction.
		
		If $|V_1|=2$, then $k=n-2$. Let $V_1=\{x,y\}$ and assume that $col(xy)=\alpha$. Since $G$ has no monochromatic triangles, we have $\alpha\neq 1$. For each pair of distinct vertices $v_i$ and $v_j$ with $i,j\geq 2$, apply Claim \ref{cl:C4} to $(x,y,v_i, v_j)$, there holds $col(v_iv_j)\in\{i,j\}$. Consider the cycle $xyv_ivx$, we get $col(yv_i)=i\text{~or~}\alpha$. Note the symmetry between $x$ and $y$. We also get $col(xv_i)=i\text{~or~}\alpha$. Let $f(v_i)=i$ for all $i\in [2,k]$, let $f(v)=1$ and $f(x)=f(y)=\alpha$. We can see that $G$ is degenerate, a contradiction.
		
		If $|V_1|\geq 3$, then there exist vertices $x,y,z\in V_1$ such that $col(xy)\neq col(yz)$ (since $G[V_1]$ contains no monochromatic triangles). Let $col(xy)=\alpha$ and $col(yz)=\beta$. Then $1\not\in \{\alpha,\beta\}$. For each pair of distinct vertices $v_i$ and $v_j$, apply Claim \ref{cl:C4} to $(x,y,v_i,v_j)$, we get $col(v_iv_j)\in\{i,j\}$. Consider cycles $vv_iyxv$ and $vv_iyzv$. We get $col(yv_i)=i$ for all $i\in [2,k]$.
		Let $f(v_i)=i$ for all $i\in [2,k]$ and let $f(v)=1$. Let $H$ be the edge-colored subgraph of $G$ induced by $V_1$.  For each vertex $u\in V(H)$ with $d^c_H(u)\geq 2$, by a similar argument applied to $y$, we get $col(uv_i)=i$ for all $i\in[2,k]$.  
		For each vertex $w\in V(H)$ with $d^c_H(w)=1$, obviously, $w\neq y$. Let $f(w)$ be the unique color in $col(w, V_1-w)$. Note that $col(wy)\neq 1$ (otherwise $wyvw$ is a monochromatic triangle) and consider the cycle $wyvv_iw$. We get $col(wv_i)=i$ or $f(w)$. Let $S=\{w\in V(H): d^c_H(w)=1\}$. Then it is easy to see that $\cup_{i=2}^k V_i\cup S\cup \{v\}$ is a degenerate set of $G$, a contradiction.
		
		The proof is complete. 
	\end{proof}
	\setcounter{claim}{0}
	\section{Proof of Theorem \ref{thm:main}}\label{sec:3}
	When Theorem \ref{thm:main}(b) fails, either $G$ is degenerate or  non-degenerate. In this section, we prove Theorem \ref{thm:main} under these two cases, which are delivered in Theorems \ref{thm:deg-vertex-pan} and  \ref{thm:non-deg} respectively.
	\begin{theorem}\label{thm:deg-vertex-pan}
		Let $G$ be a degenerate edge-colored $K_n~(n\geq 4)$ without containing monochromatic triangles. If each set $S\subset V(G)$ is not a degenerate set of $G$, then each vertex of $G$ is contained in PC cycles of all lengths from $4$ to $n$.
	\end{theorem}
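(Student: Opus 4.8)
The plan is to translate the degenerate edge-colored $K_n$ into a multipartite tournament and then invoke Lemma \ref{lem:directed}. Fix a function $f$ compatible to $G$, so that $col(uv)\in\{f(u),f(v)\}$ for every edge $uv$. First I would observe that no three vertices can share a common $f$-value: if $f(x)=f(y)=f(z)$, then every edge of the triangle $xyz$ is forced to that common color, producing a monochromatic triangle. Hence the level sets of $f$ partition $V(G)$ into classes of size at most $2$, which will serve as the partite sets.

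Next I would orient each edge joining two distinct classes by the rule $u\to v$ iff $col(uv)=f(u)$ (this is well defined since for $u,v$ in different classes exactly one of $f(u),f(v)$ equals $col(uv)$), leaving the monochromatic edge inside a size-$2$ class unoriented. Call the resulting multipartite tournament $D$. The crucial dictionary is that directed cycles in $D$ correspond to PC cycles in $G$: along a directed cycle $u_1\to\cdots\to u_\ell\to u_1$ one has $col(u_iu_{i+1})=f(u_i)$, and since consecutive vertices lie in different classes, the two colors $f(u_{i-1})$ and $f(u_i)$ meeting at $u_i$ differ, so the cycle is properly colored on the same vertex set. I would then verify the hypotheses of Lemma \ref{lem:directed}. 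For a size-$2$ class $\{x_i,y_i\}$, the condition $N^+(x_i)\cap N^+(y_i)=\emptyset$ says precisely that no external vertex $w$ satisfies $col(wx_i)=col(wy_i)=f(x_i)$; together with $col(x_iy_i)=f(x_i)$ this is exactly the absence of a monochromatic triangle on $\{x_i,y_i,w\}$, which is guaranteed by hypothesis.

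The heart of the argument, and the only place the ``no proper degenerate set'' hypothesis is used, is showing that $D$ is strongly connected. Here I would argue by contradiction: if $D$ is not strong, there is a nonempty proper subset $A$ of $V(G)$ with all cross-class arcs directed out of $A$. I then claim that $A$ is a degenerate set of $G$ with compatible function $f|_A$. Condition (i) of Definition \ref{def:non-deg} is inherited from the global compatibility of $f$, while condition (ii) holds because for $u\in A$ and $v\notin A$ the edge $uv$ has color $f(u)$: when $u,v$ lie in different classes this is just the statement that the arc points $u\to v$, and when they lie in the same class the internal edge already has color $f(u)$. This contradicts the hypothesis, so $D$ must be strongly connected.

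Finally I would assemble the pieces. If every class is a singleton, then $D$ is an ordinary strongly connected tournament on $n\geq 4$ vertices, and Lemma \ref{lem:Moon} already places each vertex on directed cycles of all lengths from $3$ to $n$; otherwise some class has size $2$ and Lemma \ref{lem:directed} places each vertex on directed cycles of all lengths from $4$ to $n=|V(D)|$. Translating back through the dictionary yields PC cycles of all lengths from $4$ to $n$ through every vertex. The step I expect to require the most care is the strong-connectivity argument, since it is where the combinatorial hypothesis must be converted into the exact degenerate-set structure of Definition \ref{def:non-deg}; in particular, checking condition (ii) for same-class pairs split across $A$ and its complement is the kind of detail that is easy to overlook.
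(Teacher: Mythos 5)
Your proposal is correct and follows essentially the same route as the paper: the paper likewise fixes a compatible function $f$, builds the digraph $D$ with arcs $uv$ whenever $col(uv)=f(u)$ and $col(uv)\neq f(v)$, notes the classes have size at most $2$ with $N^+(x_i)\cap N^+(y_i)=\emptyset$ by the absence of monochromatic triangles, deduces strong connectivity from the absence of proper degenerate sets, and applies Lemma \ref{lem:directed}. If anything, you are slightly more careful than the paper at two points it leaves implicit: you spell out the strong-connectivity step (a source component of the condensation would be a proper degenerate set), and you invoke Lemma \ref{lem:Moon} for the case where every class is a singleton, which Lemma \ref{lem:directed} (stated only for $t\geq 1$) does not literally cover.
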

	\begin{proof}
		Since $G$ is degenerate, there exists a compatible function $f: V(G)\rightarrow col(G)$. Assume that $col(G)=\{1,2,\ldots,k\}$ and $V_i=\{u\in V(G): f(u)=i\}$ for all $i\in[1,k]$. Then $|V_i|\leq 2$ (otherwise $G[V_i]$ contains a monochromatic triangle). Define a directed graph $D$ with $V(D)=V(G)$ and 
		$$A(D)=\{uv: col(uv)=f(u)\text{~and~} col(uv)\neq f(v)\}.$$
		Since $G$ is degenerate and containing no degenerate set $S\neq V(G)$, $D$ is actually a strongly connected $k$-partite tournament with partite sets $V_1,V_2,\ldots, V_k$. Note that $G$ contains no monochromatic triangles, we have $N^+(x)\cap N^+(y)=\emptyset$ for each pair of distinct vertices $x$ and $y$ from a same $V_i$. Apply Lemma \ref{lem:directed} to $D$. Then each vertex of $D$ is contained in directed cycles of all lengths from $4$ to $n$, which implies that each vertex of $G$ is contained in PC cycles of all  lengths from $4$ to $n$.
	\end{proof}
	\begin{theorem}\label{thm:non-deg}
		Let $G$ be a non-degenerate edge-colored $K_n~(n\geq 4)$ without containing monochromatic triangles. Then each vertex of $G$ is contained in PC cycles of all lengths from $4$ to $n$, unless $G$ is a $K_5$ containing two edge-disjoint monochromatic pentagons.
	\end{theorem}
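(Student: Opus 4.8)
The plan is to prove Theorem \ref{thm:non-deg} by induction on $n$, using Lemma \ref{lem:C4} to establish the base and the smallest cycle length, and then an extension argument (mirroring the proof of Lemma \ref{lem:directed}) to grow PC cycles one length at a time. First I would fix a vertex $v^*$ and show it lies in a PC quadrangle; this is exactly Lemma \ref{lem:C4}, which gives the length-$4$ case for free since $G$ is non-degenerate and triangle-free in the monochromatic sense. The heart of the argument is then: given a PC cycle $C$ of length $\ell$ through $v^*$ with $4\le \ell\le n-1$, produce a PC cycle of length $\ell+1$ through $v^*$. If $V(C)\ne V(G)$, I would pick any $v\in G-C$ and try to insert it. By Lemma \ref{lem:not ext}, if $v$ cannot be inserted to extend $C$ to a PC cycle on $V(C)\cup\{v\}$, then one of three rigid structures holds: $|col(v,C)|=1$, or $col(vu)=col(uu^-)$ for all $u\in V(C)$, or the symmetric condition with $u^+$.

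The main work is to show that if \emph{every} vertex of $G-C$ is non-insertable into $C$, then $G$ is forced into degeneracy (contradicting non-degeneracy) or into the exceptional $K_5$. I would partition $V(G)\setminus V(C)$ according to which of the three Lemma \ref{lem:not ext} outcomes each vertex satisfies, analogously to the sets $S_1,S_2,S_3,S_0$ in the proof of Lemma \ref{lem:directed}. For a vertex $v$ of ``type $(a)$'' ($|col(v,C)|=1$, say all edges to $C$ coloured $c$), the color $c$ behaves like the function value $f(v)$ of Definition \ref{def:non-deg}; for vertices of ``type $(b)$'' or ``type $(c)$'', the rigid matching of $col(vu)$ to consecutive edge colors along $C$ should, after comparing two adjacent non-insertable vertices and using the absence of monochromatic triangles, force the edges among these vertices and between them and $C$ to obey a compatible function. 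The goal is to assemble a function $f$ witnessing that some nonempty $S\subsetneq V(G)$ (or all of $V(G)$) is a degenerate set, contradicting non-degeneracy; the only way this assembly can fail is a small sporadic configuration, which I expect to be precisely the two monochromatic pentagons on $K_5$.

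If instead $V(C)=V(G-\{v\})$ for a single leftover vertex $v$ and $v$ is non-insertable, I would argue more directly: the three outcomes of Lemma \ref{lem:not ext} each let me rewrite $C$ or locally exchange edges to either insert $v$ after all or to exhibit a degenerate structure. The delicate point is handling the ``alternating'' cases $(b)$ and $(c)$, where $col(vu)$ is locked to one of the two neighbouring cycle-edges at every $u$; here I would consider $col(vu)$ versus $col(vu^+)$ as $u$ runs around $C$ and use the no-monochromatic-triangle hypothesis on triangles $vuu^+$ to propagate color equalities, showing that $C$ together with $v$ must be (up to relabeling) two edge-disjoint monochromatic cycles. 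When $n>5$ this propagation collapses to a degenerate set and is excluded by non-degeneracy; only at $n=5$ can it survive as the genuine exception of part $(c)$ of Theorem \ref{thm:main}.

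The step I expect to be the main obstacle is the type-$(b)$/type-$(c)$ analysis: unlike the tournament setting of Lemma \ref{lem:directed}, where non-adjacency and domination are cleanly separated, here I must simultaneously track the \emph{actual colors} $col(vu)$ and their interaction with the cycle colors, and rule out all the ways a would-be degenerate function could be obstructed except the pentagon configuration. Carefully showing that non-degeneracy plus $n\ge 6$ kills the alternating case — i.e. that a pair of ``interlocking'' monochromatic structures cannot persist once $n$ exceeds $5$ without creating either a PC extension or a degenerate set — is where the bulk of the casework will lie, and is the key place where the $K_5$ exception must be isolated rather than eliminated.
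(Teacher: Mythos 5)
Your setup coincides with the paper's: Lemma \ref{lem:C4} for the quadrangle, then assuming a vertex $u^*$ lies on a PC cycle $C$ of length $\ell\in[4,n-1]$ but on no PC cycle of length $\ell+1$, invoking Lemma \ref{lem:not ext} to split $V(G)\setminus V(C)$ into the three rigid types, and aiming to contradict non-degeneracy by exhibiting a degenerate set. The paper does exactly what you sketch for the easy half: the vertices with $|col(v,C)|=1$ would form a degenerate set (so there are none), and the two ``locked'' types cannot coexist, which is settled by comparing pairs of outside vertices as you propose. (Your framing as induction on $n$ is harmless but unused; nothing in your argument appeals to smaller $n$.)

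The genuine gap is in the main case, where every outside vertex $z$ satisfies, say, $col(zu)=col(uu^+)$ for all $u\in V(C)$. Here the degenerate set that non-degeneracy must forbid is not assembled from the outside vertices at all: it is $V(C)$ itself, with $f(v_i)=col(v_iv_i^+)$. Condition (ii) of Definition \ref{def:non-deg} holds automatically for this choice (every edge leaving $C$ has color $col(zv_i)=f(v_i)$), so $V(C)$ is degenerate if and only if every chord $v_iv_j$ has color $f(v_i)$ or $f(v_j)$. Non-degeneracy therefore forces a ``bad'' chord, and eliminating it is the entire difficulty of the theorem --- precisely the part your proposal defers to ``the bulk of the casework.'' The one concrete tool you offer there is vacuous: for such a vertex $z$ the triangle $zuu^+$ has $col(zu)=col(uu^+)$, so it already carries exactly two colors and the no-monochromatic-triangle hypothesis yields no constraint. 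What the paper actually does is propagate the bad chord around $C$ by constructing PC $(\ell+1)$-cycles through an outside vertex $z$, forcing $\ell$ to be even, every color of $C$ to appear exactly twice, the two half-arcs of $C$ to be rainbow, and $col(v_iv_{i+\ell/2})=f(v_{i-1})$; then $\ell=6$ dies by producing a monochromatic triangle, $\ell\ge 8$ dies by explicit PC $(\ell+1)$-cycles woven through the two parity subcycles $v_1v_3v_5\cdots$ and $v_2v_4v_6\cdots$, and only $\ell=4$ with a single outside vertex survives --- that survivor is the $K_5$ exception. So the exception arises from a bad chord of a quadrangle, not (as you anticipate) from a failed assembly of a degenerate set among the outside vertices, and without this chord analysis or a substitute for it the proof does not close.
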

	\begin{proof}
		Let $G$ be a non-degenerate edge-colored complete graph without containing monochromatic triangles. By Lemma \ref{lem:C4}, each vertex of $G$ is contained in a PC quadrangle. If $n=4$, then the proof is done. Now assume that $n\geq 5$ and $u^*$ is a vertex in $G$ contained in a PC cycle $C$ of length $\ell\in [4,n-1]$ but not in any PC cycle of length $\ell+1$. We will show that $G$ is a $K_5$ containing two edge-disjoint monochromatic pentagons.
		
		Give $C$ a direction. Let $S=V(G)\setminus V(C)$,  
		$$S_1=\{v\in S: |col(v,C)|=1\},$$ $$S_2=\{v\in S\setminus S_1: col(vu)=col(uu^+) \text{~for all~} u \in V(C)\}$$ and 
		$$S_3=\{v\in S\setminus S_1: col(vu)=col(uu^-) \text{~for all~} u \in V(C)\}.$$
		Since $u^*$ is not contained in any PC cycle of length $\ell+1$, by Lemma \ref{lem:not ext}, $S=S_1\cup S_2\cup S_3$. 
		\begin{claim}\label{cl:direction}
			$S=S_2$ or $S=S_3$.
		\end{claim}
		\begin{proof}
			First we prove that $S_1=\emptyset$. Suppose not. Then for each $y\in S_1$, let $f(y)$ be the unique color in $col(y, C)$. Since $G$ contains no monochromatic triangle, we have $f(y)\not\in col(C)$. For each vertex $z\in S_2$ (if $S_2\neq \emptyset$), choose $x\in V(C)$ such that $u^*\not\in \{x,x^-\}$. Since $yx^+Cx^-zy$ and $yxCx^{--}zy$ are not a PC cycles, we have 
			$$col(zy)\in \{f(y), col(zx^-)\} \cap \{f(y), col(zx^{--})\}.$$ 
			Note that $col(zx^-)\neq col(zx^{--})$ and $\{col(zx^-), col(zx^{--})\}\subseteq col(C)$. We have $col(zy)=f(y)$. Similarly, for each vertex $w\in S_3$, we have $col(wy)=f(y)$. For a vertex $y'\in S_1-y$, if $col(yy')\not\in col(\{y,y'\}, C)$, then choose a vertex $x\in V(C)$ such that $x\neq u^*$. We get a PC cycle $yx^+Cx^-y'y$ of length $\ell+1$ containing $u^*$, a contradiction. So either $col(yy')=f(y)$ or $col(yy')=f(y')$. In summary, $S_1$ is a degenerate set of $G$, a contradiction. So  $S_1=\emptyset$.
\begin{figure}[h]
	\centering
	\subfigure[$col(x^-x^+)=1$]{
		\includegraphics[width=0.22\textwidth]{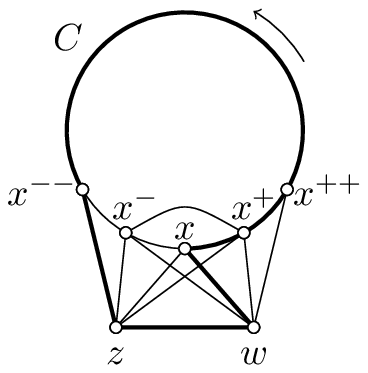}}
	\hskip 25 pt
	\subfigure[$col(x^-x^+)=2$]{
		\includegraphics[width=0.22\textwidth]{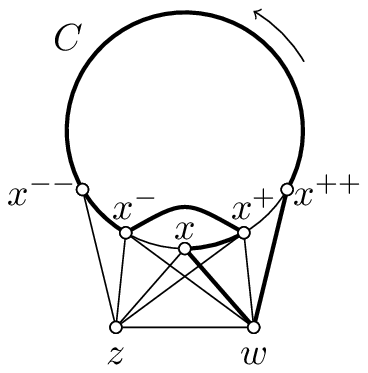}}
	\caption{Two cases when $S_1,S_2\neq \emptyset$ \label{fig:S-S2}}
\end{figure}	

			Suppose there are vertices $z\in S_2$ and $w\in S_3$. Choose a vertex $x\in V(C)$ such that $u^*\not\in \{x,x^+,x^-\}$ (this is possible since $\ell\geq 4$). Assume that $col(xx^+)=1$ and $col(xx^-)=2$. Then $col(zx^-)=2$ and $col(wx^+)=1$. 		
			Since $zwx^+Cx^-z$ is not a PC cycle, we have $col(zw)\in\{1,2\}$. Without loss of generality, assume that $col(zw)=1$. Since $x^+wx^{++}Cx^{--}zx^-x^+$ (it is possible that $x^{++}=x^{--}$) is not a PC cycle, we get $col(x^-x^+)\in\{1,2\}$. 		
			If $col(x^-x^+)=1$ (see Figure \ref{fig:S-S2}(a)), then $col(wx^-)\neq 1$ (otherwise $wx^+x^-w$ is a monochromatic triangle). This implies that $col(x^-x^{--})\neq 1$. Thus $col(zx^{--})\neq 1$. Hence $wxCx^{--}zw$ is a PC cycle of length $\ell+1$ containing $u^*$, a contradiction. If $col(x^-x^+)=2$ (see Figure \ref{fig:S-S2}(b)), then $col(zx^+)\neq 2$ (otherwise, $zx^-x^+z$ is a monochromatic triangle). This implies that $col(x^+x^{++})\neq 2$ and $col(wx^{++})\neq 2$. Hence $x^+xwx^{++}Cx^-x^+$  is a PC cycle of length $\ell+1$ containing $u^*$, a contradiction. 	
				
			In summary, either 	$S=S_2$ or $S=S_3$.
		\end{proof}
		
		Now without loss of generality, assume that $S=S_2$, $C=v_1v_2\cdots v_{\ell}v_1$ and $v_i^+=v_{i+1}$ for all $i\in [1,\ell]$ (indices are taken modulo $\ell$).  For each $i\in [1,\ell]$, define  $f(v_i)=col(v_iv_{i+1})$. Then $col(zv_i)=f(v_i)$ for all $z\in S$.
		\begin{claim}\label{cl:repeat}
			For $1\leq i<j\leq \ell$, if $col(v_iv_j)\neq f(v_i) \text{~and~} col(v_iv_j)\neq f(v_j)$, then $col(v_{i-t}v_{j-t})=f(v_{i-t-1})=f(v_{j-t-1})$ for all $t\in [1,\ell]$ (indices are taken modulo $\ell$).
		\end{claim}
		\begin{proof}
			Let $z$ be a vertex in $S$.
			Since $col(v_iv_j)\neq f(v_i) \text{~and~} col(v_iv_j)\neq f(v_j)$, the edge $v_iv_j$ is a chord of $C$ and $v_{i+1}v_iv_jv_{j+1}$ is a PC path. If $f(v_{i-1})\neq f(v_{j-1})$, then $zv_{i-1}\bar{C}v_{j}v_iC v_{j-1}z$ (See Figure \ref{fig:chord}(a)) is a PC cycle of length $\ell+1$ containing $u^*$, a contradiction. So we can assume that $f(v_{i-1})=f(v_{j-1})=\alpha$. Since $zv_{i-1}v_{j-1}z$ is not a monochromatic triangle, we have $col(v_{i-1}v_{j-1})\neq\alpha$. Consider the cycle  $zv_{j-1}v_{i-1}\bar{C} v_jv_iCv_{j-2}z$ (See Figure \ref{fig:chord}(b)). We get $col(v_{i-1}v_{j-1})=f(v_{i-2})$. Similarly, consider the cycle $zv_{i-1}v_{j-1}\bar{C} v_iv_jCv_{i-2}z$ (See Figure \ref{fig:chord}(c)). We get $col(v_{i-1}v_{j-1})=f(v_{j-2})$. Hence we have $col(v_{i-1}v_{j-1})=f(v_{i-2})=f(v_{j-2})$. Note that $f(v_{i-2})\neq f(v_{i-1})$ and $f(v_{j-2})\neq f(v_{j-1})$. Thus $col(v_{i-1}v_{j-1})\neq f(v_{i-1})$ and $col(v_{i-1}v_{j-1})\neq f(v_{j-1})$. By repeating the above argument, we finally obtain $col(v_{i-t}v_{j-t})=f(v_{i-t-1})=f(v_{j-t-1})$ for all $t\in [1,\ell]$.
		\end{proof}
		\begin{figure}[h]
			\centering
			\subfigure[$f(v_{i-1})\neq f(v_{j-1})$]{
				\includegraphics[width=0.21\textwidth]{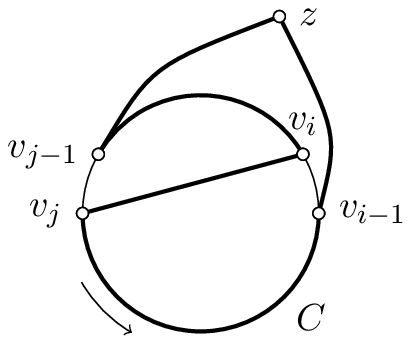}}
			\hskip 25 pt
			\subfigure[$col(v_{i-1}v_{j-1})\neq f(v_{i-2})$]{
				\includegraphics[width=0.25\textwidth]{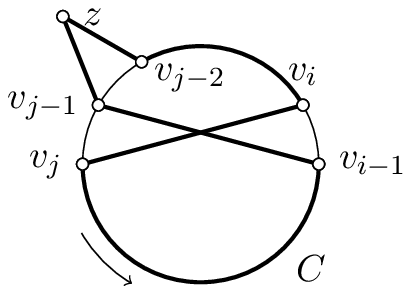}}
			\hskip 25 pt
			\subfigure[$col(v_{i-1}v_{j-1})\neq f(v_{j-2})$]{
				\includegraphics[width=0.25\textwidth]{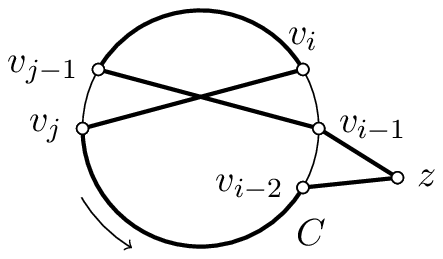}}
			\caption{Three cases in the proof of Claim \ref{cl:repeat}\label{fig:chord}}
		\end{figure}
		
		\begin{claim}\label{cl:period}
			For each $i\in [1,\ell]$, the following statements hold (indices are taken modulo $\ell$).\\
			$(a)$ $\ell$ is even and $f(v_i)=f(v_{\frac{\ell}{2}+i})$;\\
			$(b)$ $v_iv_{i+1}\cdots v_{i+\frac{\ell}{2}}$ and $v_{i+\frac{\ell}{2}}v_{i+\frac{\ell}{2}+1}\cdots v_{i+\ell}$ are rainbow paths;\\     
			$(c)$ $col(v_{i}v_{i+\frac{\ell}{2}})=f(v_{i-1})=f(v_{i+\frac{\ell}{2}-1})$.
		\end{claim}
		\begin{proof}
			If $col(v_iv_j)=f(v_i) \text{~or~} col(v_iv_j)=f(v_j)$ for all $i,j$ with $1\leq i<j\leq \ell$, then we can easily see that $V(C)$ is a degenerate set of $G$, which contradicts that $G$ is non-degenerate. So $C$ has a chord $v_iv_j$ satisfying  $col(v_iv_j)\neq f(v_i) \text{~and~} col(v_iv_j)\neq f(v_j)$ . By Claim \ref{cl:repeat}, we get $col(v_{i-t}v_{j-t})=f(v_{i-t-1})=f(v_{j-t-1})$ for all $t\in [1,\ell]$. Thus each color in $col(C)$ appears at least twice on $C$. Suppose that $\ell$ is even with $|j-i|\neq \frac{\ell}{2}$ or $\ell$ is odd. Then the color $f(v_i)$ appears at least three times on $C$. Assume that $f(v)=f(u)=f(w)=\alpha$ for three vertices $u,v,w\in V(C)$. Since $zvuz$ is not a monochromatic triangle, there holds $col(uv)\neq \alpha$. Similarly, we have $col(uw)\neq \alpha$ and $col(vw)\neq \alpha$. Applying Claim \ref{cl:repeat} to the edge $uv$, we get $col(uv)=f(u^-)=f(v^-)$ by choosing $t=\ell$. By the symmetry between $u,v$ and $w$, we finally get  
			$f(u^-)=f(v^-)=f(w^-)$ (say ``$=\beta$'') and $col(uv)=col(uw)=col(vw)=\beta$, which is a monochromatic triangle, a contradiction. Therefore, $\ell$ is even, $|j-i|=\frac{\ell}{2}$ and each color in $col(C)$ appears exactly twice on $C$. Thus $v_1v_2\cdots v_{\frac{\ell}{2}+1}$ and $v_{\frac{\ell}{2}+1}v_{\frac{\ell}{2}+2}\cdots v_\ell v_1$ are rainbow paths with  $f(v_i)=f(v_{\frac{\ell}{2}+i})$  and  $col(v_{i}v_{i+\frac{\ell}{2}})=f(v_{i-1})=f(v_{i+\frac{\ell}{2}-1})$ for all $i\in [1,\ell]$.
		\end{proof}
		
		\begin{claim}\label{cl:minus2}
			If $\ell\geq 6$, then $col(v_iv_{i-2})=f(v_i)$ for all $i\in[1,\ell]$.
		\end{claim}
		\begin{proof}
			For each $i\in [1,\ell]$, Claim \ref{cl:period} implies that the segment $v_iv_{i+1}\cdots v_{i+s}$ with $1\leq s\leq \frac{\ell}{2}$ is a rainbow path. In particular, since $\ell\geq 6$, the path $v_{i-2}v_{i-1}v_iv_{i+1}$ is rainbow. Hence $f(v_i)\neq f(v_{i-2})$. By Claim \ref{cl:repeat} (choose $j=i-2$ and $t=\ell-1$), we can see that $col(v_iv_{i-2})\in \{f(v_i), f(v_{i-2})\}$. Suppose that $col(v_iv_{i-2})\neq f(v_i)$. Then $col(v_iv_{i-2})=f(v_{i-2})$. Let $z$ be a vertex in $S$. By Claim \ref{cl:period}, we can check that $v_iCv_{i+\frac{\ell}{2}-2}zv_{i-1}v_{i+\frac{\ell}{2}-1}Cv_{i-2}v_i$ (See Figure \ref{fig:cross})
			is PC cycle of length $\ell+1$ containing $u^*$, a contradiction. 
		\end{proof}
		\begin{figure}[h]
			\centering
			\includegraphics[width=0.28\textwidth]{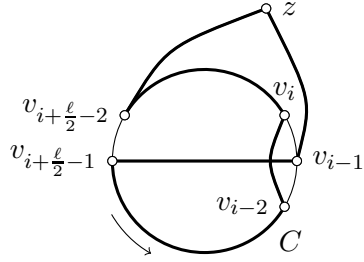}
			\hskip 25 pt
			\caption{$col(v_iv_{i-2})=f(v_{i-2})$\label{fig:cross}}
		\end{figure}
		Now by Claim \ref{cl:period}, we assume that $f(v_i)=f(v_{i+\frac{\ell}{2}})=i$ for all $i\in [1,\frac{\ell}{2}]$. 
		Note that $\ell$ is even. We will complete the proof by discussing the value of $\ell$.
		
		If $\ell=6$, then we have $col(v_1v_4)=3$ (by Claim \ref{cl:period} (c)) and $col(v_1v_3)=3$ (by Claim  \ref{cl:minus2}). Note that $col(v_3v_4)=3$. We obtain a monochromatic triangle $v_1v_3v_4v_1$, a contradiction. 
		
		If $\ell\geq 8$, then let $z$ be a vertex in $S$. Let $C_1=v_1v_3v_5\cdots v_{\ell-1}v_1$ and $C_2=v_2v_4v_6\cdots v_\ell v_2$. Then according to Claim \ref{cl:minus2}, we know that $C_1$ and $C_2$ are PC cycles. 
		If $\frac{\ell}{2}$ is odd, then $\ell\geq 10$ and $v_3v_{3+\frac{\ell}{2}}C_2v_{1+\frac{\ell}{2}}v_1\bar{C_1}v_5zv_3$ (See Figure \ref{fig:large-l}(a)) is a PC cycle of length $\ell+1$ containing $u^*$, a contradiction.     
		If $\frac{\ell}{2}$ is even, then $zv_3C_1v_{1+\frac{\ell}{2}}v_1\bar{C_1}v_{3+\frac{\ell}{2}}v_{4+\frac{\ell}{2}}C_2v_2v_{2+\frac{\ell}{2}}\bar{C_2}v_4z$ (See Figure \ref{fig:large-l}(b)) is a PC cycle of length $\ell+1$ containing $u^*$, a contradiction.
		\begin{figure}[h]
			\centering
			\subfigure[$\frac{\ell}{2}$ is odd]{
				\includegraphics[width=0.35\textwidth]{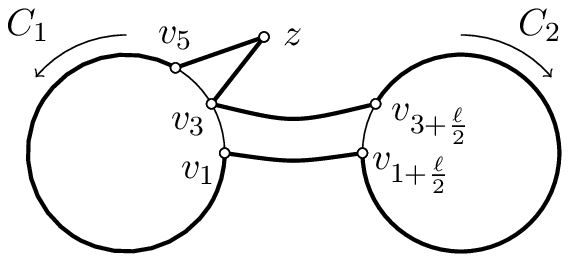}}
			\hskip 25 pt
			\subfigure[$\frac{\ell}{2}$ is even]{
				\includegraphics[width=0.42\textwidth]{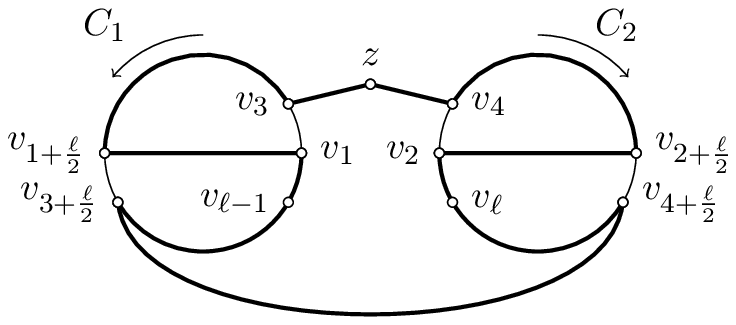}}
			\caption{Two cases when $\ell\geq 8$\label{fig:large-l}}
		\end{figure}
		
		Therefore, $\ell=4$. By Claim \ref{cl:period}, we get $f(v_1)=f(v_3)=1$, $f(v_2)=f(v_4)=2$, $col(v_1v_3)=col(v_1v_{1+\frac{\ell}{2}})=f(v_4)=2$ and $col(v_2v_4)=col(v_2v_{2+\frac{\ell}{2}})=f(v_1)=1$. If $|G|\geq 6$, then there are two distinct vertices $z,z'\in S$. 
		Since $G$ contains no monochromatic triangles. we have $col(zz')\not\in \{1,2\}$. Without loss of generality, assume that $u^*=v_1$. Then $zz'v_2v_4v_1z$ is a PC cycle of length $5$ containing $u^*$, a contradiction.
		In summary, we have $\ell=4$ and $|G|=5$. It is easy to check that $G$ is a $K_5$ containing two edge-disjoint monochromatic pentagons.
			
		The proof is complete.
	\end{proof}
	\begin{proof}[\bf Proof of Theorem \ref{thm:main}]
		Theorem \ref{thm:main} can be proved immediately by Theorems \ref{thm:deg-vertex-pan} and \ref{thm:non-deg}.
	\end{proof}


\begin{thebibliography}{10}
		\bibitem{Bang-Gutin:2008}
		{J. Bang-Jensen and  Gutin,}
		{\em Digraphs: Theory, Algorithms and Applications}, second ed.,
		Springer-Verlag, London, 2009.
		
		\bibitem{Barr:1998}
		{O. Barr,}
		\newblock {Properly coloured Hamiltonian paths in edge-coloured complete graphs without monochromatic triangles},
		\newblock {\em Ars Combin.,} {\bf 50} (1998) 316--318.
		
		\bibitem{Bollobas-Erdos:1976}
		{B. Bollob\'{a}s and P. Erd\H{o}s},
		\newblock Alternating Hamiltonian cycles,
		\newblock {\em Israel J. Math.,} {\bf 23} (1976) 126--131.
		
		\bibitem{Bondy: 1971}
		{J.A. Bondy,}
		\newblock {Pancyclic graphs I}, 
		\newblock {\em J. Combin. Theory Ser. B}, {\bf 11} (1971) 80--84. 
		
		\bibitem{Bondy:2008}
		{J.A. Bondy and U.S.R. Murty},
		\newblock {\em Graph Theory},
		\newblock {Springer Graduate Texts in Mathematics, vol. 244 (2008)}.
		
		\bibitem{Chen:2019}
		{X. Chen, F. Huang and J. Yuan},
		\newblock Proper vertex-pancyclicity of edge-colored complete graphs
		without monochromatic triangles,
		\newblock{\em Discrete Appl. Math.,} {\bf 265} (2019) 199--203.
		
		
		
		
		\bibitem{Fujita:2011}
		{S. Fujita and C. Magnant},
		\newblock Properly colored paths and cycles,
		\newblock{\em Discrete Appl. Math.,} {\bf 159} (2011) 1391--1397.
		
		\bibitem{Gallai:1967}
		{T. Gallai},
		\newblock Transitiv orientierbare Graphen,
		\newblock{\em Acta Math. Hungar.}, {\bf 18} (1967) 25--66.
		
		\bibitem{Gyarfas: 2004}
		{A. Gy\'{a}rf\'{a}s and G. Simonyi},
		\newblock Edge colorings of complete graphs without tricolored triangles,
		\newblock{\em J. Graph Theory,} {\bf 46} (2004) 211--216.
		
		\bibitem{R.Li:2017}
		{R. Li, H. Broersma, C. Xu and S. Zhang},
		\newblock Cycle extension in edge-colored complete graphs,
		\newblock {\em Discrete Math.,} {\bf 340} (2017) 1235--1241.
		
		
		\bibitem{RN-BL-SG:2020}
		{R. Li, B. Li and S. Zhang},
		\newblock A classification of edge-colored graphs based on properly colored walks, 
		\newblock{\em Discrete Appl. Math.,} {\bf 283} (2020) 590--595.
		
		%
		\bibitem{Lo:2014-2}
		{A. Lo},
		\newblock An edge-colored version of Dirac's theorem,
		\newblock{\em SIAM J. Discrete Math.,} {\bf 28} (2014) 18--36.
		
		
		\bibitem{Lo:2016+}
		{A. Lo},
		\newblock Properly coloured Hamiltonian cycles in edge-coloured complete graphs, 
		\newblock{\em Combinatorica,} {\bf 36} (2016) 471--492.	
		
		\bibitem{Lo:2019}
		{A. Lo}, 
		\newblock Long properly coloured cycles in edge‐coloured graphs,
		\newblock{\em J. Graph Theory,} {\bf 90} (2019) 416--442.
		
		\bibitem{Moon:1966}
		{J. W. Moon},
		\newblock On subtournaments of a tournament, 
		\newblock{\em Canad. Math. Bull.,} {\bf 9} (1966) 297--301.
		
		\bibitem{Schmeichel-Hakimi:1974}
		{E.F. Schmeichel and S.L. Hakimi},
		\newblock {Pancyclic graphs and a conjecture of Bondy and Chvatal},
		\newblock {\em J. Combin. Theory Ser. B}, {\bf 17} (1974) 22--34.
		
		
	\end{thebibliography}
\end{document}